\newtheorem{theorem}{Theorem}[section]
\newtheorem{corollary}{Corollary}[section]
\newtheorem{proposition}{Proposition}[section]
\newtheorem{lemma}{Lemma}[section]
\newtheorem{definition}{Definition}[section]
\newtheorem{remark}{Remark}[section]
\newenvironment{proof}{{\noindent\bf Proof.}\quad}{\hfill $\square$\\}
\begin{document}

\title[Tikhonov regularization for polynomial approximation in Gauss points]{Tikhonov regularization for polynomial approximation problems in Gauss quadrature points}

\author{Congpei An$^1$ and Hao-Ning Wu$^2$}

\address{$^1$ School of Economic Mathematics, Southwestern University of Finance and Economics, Chengdu, China}
\address{$^2$ Department of Mathematics, The University of Hong Kong, Hong Kong, China}
\eads{\mailto{ancp@swufe.edu.cn} and \mailto{hnwu@hku.hk}}
\vspace{10pt}
\begin{indented}
\item[]November 2020
\end{indented}

\begin{abstract}
This paper is concerned with the introduction of Tikhonov regularization into least squares approximation scheme on $[-1,1]$ by orthonormal polynomials, in order to handle noisy data. This scheme includes interpolation and hyperinterpolation as special cases. With Gauss quadrature points employed as nodes, coefficients of the approximation polynomial with respect to given basis are derived in an entry-wise closed form. Under interpolatory conditions, the
solution to the regularized approximation problem is rewritten in forms of two kinds of barycentric interpolation
formulae, by introducing only a multiplicative correction factor into both classical barycentric
formulae. An $L_2$ error bound and a uniform error bound are derived, providing similar information that 
Tikhonov regularization is able to reduce the operator norm (Lebesgue constant) and the error term related to the level of noise, both by multiplying a correction factor which is less than one. Numerical examples show the benefits of Tikhonov regularization when data is noisy or data size
is relatively small.
\end{abstract}

%
\vspace{2pc}
\noindent{\it Keywords}: Tikhonov regularization, hyperinterpolation, barycentric interpolation, Gauss quadrature, polynomial approximation.
%

%
%
%

\section{Introduction}\label{sec;introduction}
Polynomial approximation is used as the basic means of approximation in many fields of numerical analysis, such as interpolation and approximation theory, numerical integration, numerical solutions to differential and integral equations. In particular, the orthogonal polynomial expansion occurs and plays an important role in these fields. It has been known that interpolation based on zeros of orthogonal polynomials prevails over that based on equispaced points, and it is widely applied in numerical integration, spectral methods, and so on \cite{trefethen2013approximation}. The central issue in orthogonal polynomial computation is a fact that any nice enough function $f(x)$ can be expanded by a series of orthogonal polynomial \cite{gautschi2004orthogonal,von1939orthogonal,xiang2012error}
\begin{equation}\label{equ:series}
f(x)=\sum_{\ell=0}^{\infty}c_{\ell}\Phi_{\ell}(x),\quad c_{\ell}=\frac{\int_{-1}^1w(x)f(x)\Phi_{\ell}(x){\rm d}x}{\int_{-1}^1w(x)\Phi_{\ell}^2(x){\rm d}x},\quad\ell=0,1,\ldots,
\end{equation}
where $\{\Phi_{\ell}(x)\}_{\ell=0}^{\infty}$ is a family of orthogonal polynomials with respect to a nonnegative weight function $w(x)$ which satisfies 
$\int_{-1}^1w(x){\rm d}x<\infty$, and $\Phi_{\ell}(x)$ is of degree $\ell$. We only talk about approximations on $[-1,1]$ in this paper, as any bounded interval can be scaled to $[-1,1]$. One natural approximation to $f$ in the polynomial space $\mathbb{P}_L$ of degree at most $L$ is the polynomial obtained by \emph{truncation} to degree $L$:
\begin{equation*}
p_L^{\rm{trun}}(x)=\sum_{\ell=0}^Lc_{\ell}\Phi_{\ell}(x),
\end{equation*}
with coefficients $\{c_{\ell}\}_{\ell=0}^L$ are the same as those of $f$ which are given in \eref{equ:series}. Another is the polynomial obtained by \emph{interpolation}:
\begin{equation*}
p_L^{\rm{inter}}(x)=\sum_{\ell=0}^Ld_{\ell}\Phi_{\ell}(x),
\end{equation*}
called \emph{interpolant}, where $\{d_{\ell}\}_{\ell=0}^L$ is a set of coefficients which are determined such that $p_L^{\rm{inter}}(x)$ interpolates some given discrete points. Though $\{c_{\ell}\}_{\ell=0}^L$ can be approximated via some quadrature rules on some discrete points, they are usually different from $\{d_{\ell}\}_{\ell=0}^L$ in general.
 
To establish a connection between coefficients in the truncated polynomial and the polynomial interpolant, and to compute coefficients in concerned expansions efficiently on the computer as well, we consider approximations with coefficients computed in a discrete way and we use normalized orthogonal (orthonormal) polynomials $\{\tilde{\Phi}_{\ell}\}_{\ell=0}^L$. That is, we are interested in approximation of a function (possibly noisy) $f\in\mathcal{C}([-1,1])$, where $\mathcal{C}([-1,1])$ is the space of continuous functions on $[-1,1]$, by a polynomial
\begin{equation}\label{solutionp}
p_{L}(x)=\sum_{\ell=0}^L\beta_{\ell}\tilde{\Phi}_{\ell}(x)\in \mathbb{P}_L,\quad x\in[-1,1],
\end{equation}
where $\{\beta_{\ell}\}_{\ell=0}^L$ is a set of coefficients to be determined. Orthogonal polynomials are normalized as $\tilde{\Phi}_{\ell}(x):={\Phi_{\ell}(x)}/{\|\Phi_{\ell}(x)\|_{L_2}}$, $\ell=0,\ldots,L$, where the $L_2$ norm 
\begin{equation}\label{equ:l2norm}
\|f\|_{L_2}:=\sqrt{\left<f(x),f(x)\right>_{L_2}}=\left(\int_{-1}^1w(x)|f(x)|^2{\rm d}x\right)^{1/2}
\end{equation}
is induced by the $L_2$ inner product $\left<f(x),g(x)\right>_{L_2}:=\int_{-1}^1w(x)f(x)g(x){\rm d}x$ which defines the orthogonality in orthogonal polynomials \cite{gautschi2004orthogonal,von1939orthogonal}. Normalization would not change the final approximation polynomial $p_L$, but it would greatly simplify the explicit expressions and the computation of $\{\beta_{\ell}\}_{\ell=0}^L$, see Section \ref{sec:explicit}.

If the approximation is studied in a discrete way, then the determination of coefficients $\{\beta_{\ell}\}_{\ell=0}^L$ shall depend on data $\{f(x_j)\}$ sampled on $\{x_j\}$. In practice, however, the sampling procedure is often contaminated by noise, and the classical least squares approximation is sensitive to noisy data. Hence we may introduce regularization techniques to handle this case. A widely used regularization technique is the Tikhonov regularization \cite{lu2013regularization,tikhonov1977solutions}, which adds an $\ell_2^2$ penalty. This technique shrinks all coefficients $\{\beta_{\ell}\}_{\ell=0}^L$ towards zero to provide stability and reduce noise. Tikhonov regularization has been widely investigated in inverse and ill-posed problems \cite{lu2009sparse,lu2006analysis,wei2016tikhonov,xiang2013regularization,xiang2015randomized,zhong2012multiscale}, in which some least squares problems are also addressed \cite{lu2010model,lu2010regularized}.

Suppose the size of sampling data is $N+1$, thus our problem with consideration to discrete format and Tikhonov regularization is stated as
\begin{equation}\label{problem:l2regu}
\min_{\beta_{\ell}\in\mathbb{R}}~~ \left\{\sum_{j=0}^N\omega_j\left(\sum_{\ell=0}^{L}\beta_{\ell}\tilde{\Phi}_{\ell}(x_j)-f(x_j)\right)^2+\lambda\sum_{\ell=0}^L|\beta_{\ell}|^2\right\},\quad\lambda>0,
\end{equation}
where $f$ is a given continuous function with values (possibly noisy) taken at a set $\mathcal{X}_{N+1}=\{x_0,x_1,\ldots,x_N\}$ on $[-1,1]$; $\{\omega_0,\omega_1,\ldots,\omega_N\}$ is a set of some weights; and $\lambda>0$ is the regularization parameter.

It is natural to choose a set of zeros of the corresponding orthonormal polynomial $\tilde{\Phi}_{N+1}$ to be the set $\mathcal{X}_{N+1}$, because when the basis for the approximation \eref{solutionp} is chosen as $\{\tilde{\Phi}_{\ell}\}_{\ell=0}^L$, this is a usually adopted choice. Apart from this point, the choice helps us to establish the connection between the approximation polynomial \eref{solutionp} and interpolation, as many efficient interpolation schemes are based on zeros of orthogonal polynomials, for example, Chebyshev interpolation which are based on zeros of Chebyshev polynomials \cite{trefethen2013approximation}, and the fast and stable barycentric interpolation \cite{berrut2004barycentric,wang2014explicit,wang2012convergence}. It is well known that zeros of the orthogonal polynomial $\Phi_{N+1}$ of degree $N+1$ are just $N+1$ Gauss quadrature points \cite{gautschi2011numerical,kress1998numerical}. 

If we require $\{\omega_j\}_{j=0}^N$ to be $N+1$ Gauss quadrature weights, and $L$ and $N$ to satisfy $2L\leq2N+1$, then the first part in the objective function of \eref{problem:l2regu} is the Gauss quadrature approximation  
\begin{eqnarray*}
\sum_{j=0}^N\omega_j\left(\sum_{\ell=0}^{L}\beta_{\ell}\tilde{\Phi}_{\ell}(x_j)-f(x_j)\right)^2
&\approx
\int_{-1}^1w(x)\left(\sum_{\ell=0}^L\beta_{\ell}\tilde{\Phi}_{\ell}(x)-f(x)\right)^2{\rm d}x\\
&=\int_{-1}^1w(x)\left(p_L(x)-f(x)\right)^2{\rm d}x.
\end{eqnarray*}
These requirements are kept in the whole paper. Note that the interval we consider is bounded, hence the orthonormal basis is chosen as normalized Jacobi polynomials, which are defined on $[-1,1]$, from the large family of orthogonal polynomials \cite{gautschi2004orthogonal,von1939orthogonal}.

If Gauss quadrature is adopted, we can construct entry-wise closed-form solutions to problem \eref{problem:l2regu} and show that this regularized approximation scheme is a generalization of hyperinterpolation \cite{sloan1995polynomial}. Under interpolatory conditions, we rewrite the approximation polynomial \eref{solutionp} with constructed coefficients in forms of modified Lagrange interpolation and barycentric interpolation \cite{berrut2004barycentric}, respectively, presenting Tikhonov regularized modified Lagrange interpolation formula \eref{L2mdf} and Tikhonov regularized barycentric interpolation formula \eref{L2regubary}. Tikhonov regularization introduces only a simple factor $1/(1+\lambda)$ into both formulae in their classical versions. We also study the approximation quality of problem \eref{problem:l2regu} in terms of the $L_2$ norm and the uniform norm, respectively, showing operator norms of this kind of approximation can be reduced by multiplying the same factor $1/(1+\lambda)$, and an error term for noise can also be reduced by the factor. Though Tikhonov regularization reduces the above terms, it would introduce an additional error term into the total error bound, which is dependent on the best approximation polynomial $p^*$.

This paper is organized as follows. In the next section, we construct coefficients $\{\beta_{\ell}\}_{\ell=0}^L$ explicitly. In Section \ref{barycentricform}, we present Tikhonov regularized barycentric interpolation formula and Tikhonov regularized modified Lagrange interpolation formula, which are derived from the explicit approximation polynomial \eref{solutionp} under interpolatory conditions. In Section \ref{l2quality}, we study the quality of the approximation $p_{L,N+1}\approx f$ in terms of the $L_2$ norm and the uniform norm. We give several numerical examples in Section \ref{numericalexperiments} and conclude with some remarks in Section \ref{concludingremarks}.

\section{Explicit coefficients in the Tikhonov regularized orthogonal polynomial expansion}\label{sec:explicit}
We construct coefficients $\{\beta_{\ell}\}_{\ell=0}^L$ in this section. The Tikhonov regularized approximation problem \eref{problem:l2regu} can be transformed into a matrix-form problem, which makes it easy for us to construct our desired coefficients.
\subsection{Preliminaries on Gauss quadrature weights}
Gauss quadrature occurs in almost all textbooks of numerical analysis and of orthogonal polynomials as well, and we refer to  \cite{gautschi2004orthogonal,gautschi2011numerical,kress1998numerical,von1939orthogonal}.
\begin{definition}\label{def:gauss}
Given a nonnegative weight function $w(x)$ which satisfies $\int_{-1}^1w(x){\rm d}x<\infty$, a quadrature formula
\begin{equation*}
\int_{-1}^1w(x)f(x){\rm d}x\approx\sum\limits_{j=0}^N\omega_jf(x_j)
\end{equation*}
with $N+1$ distinct quadrature points $x_0,x_1,\ldots,x_N$ is called a \emph{Gauss quadrature formula} if it integrates all polynomials $p\in\mathbb{P}_{2N+1}$ exactly, i.e., if
\begin{equation}\label{gauss}
\sum\limits_{j=0}^N\omega_jp(x_j)=\int_{-1}^1w(x)p(x){\rm d}x\quad\forall p\in\mathbb{P}_{2N+1}.
\end{equation}
Points $x_0,x_1,\ldots,x_N$ are called \emph{Gauss quadrature points}.
\end{definition}
\noindent It is well known that $N+1$ Gauss quadrature points are zeros of the orthogonal polynomial $\Phi_{N+1}$ of degree $N+1$.

\subsection{Construction of explicit coefficient}
The function $f$ sampled on $\mathcal{X}_{N+1}$ generates
\begin{equation*}
\mathbf{f}:=\mathbf{f}(\mathcal{X}_{N+1})=[f(x_0),f(x_1),\ldots,f(x_N)]^{\rm{T}}\in\mathbb{R}^{N+1},
\end{equation*}
and all Gauss quadrature weights $\omega_0,\omega_1,\ldots,\omega_N$ corresponding to $\mathcal{X}_{N+1}$ form a vector
\begin{equation*}
\mathbf{w}:=\mathbf{w}(\mathcal{X}_{N+1})=[\omega_0,\omega_1,\ldots,\omega_N]^{\rm{T}}\in\mathbb{R}^{N+1}.
\end{equation*}
Let ${\mathbf{A}}:={\mathbf{A}}(\mathcal{X}_{N+1})\in\mathbb{R}^{(N+1)\times(L+1)}$ be a matrix of orthogonal polynomials evaluated at $\mathcal{X}_{N+1}$, with entries
\begin{equation*}
{\mathbf{A}}_{j\ell}=\tilde{\Phi}_{\ell}(x_j),\quad j=0,1,\ldots,N,\quad\ell=0,1,\ldots,L.
\end{equation*}
By subtracting the structure \eref{solutionp} of approximation polynomial into the Tikhonov regularized approximation problem \eref{problem:l2regu}, the problem transforms into the following problem
\begin{equation}\label{problem:l2regumatrix}
\underset{{\bbeta}\in \mathbb{R}^{L+1}}{\min}~~\|{\bf{W}}^{\frac12}(\mathbf{A}{\bbeta}-{\rm \mathbf{f}})\|^2_2+\lambda\|{\bbeta}\|_2^2,\quad\lambda>0,
\end{equation}
where
\begin{equation*}
{\bf{W}}={\rm diag}(\omega_0,\omega_1,\ldots,\omega_N)\in\mathbb{R}^{(N+1)\times(N+1)}.
\end{equation*}

Taking the first derivative of the objective function in problem \eref{problem:l2regumatrix} with respect to $\bbeta$ leads to the first order condition
\begin{equation}\label{wproblem:l22w}
\left(\mathbf{A}^{\rm{T}}{\bf{W}}\mathbf{A}+\lambda\mathbf{I}\right){\bbeta}=\mathbf{A}^{\rm{T}}{\bf{W}}{\rm \bf{f}},\quad\lambda>0,
\end{equation}
where $\mathbf{I}\in\mathbb{R}^{(L+1)\times(L+1)}$ is an identity matrix.
It is natural to solve a system of $L+1$ linear equations using methods of numerical linear algebra, especially when $L$ is large. However, the following lemma guarantees a diagonal structure of $\mathbf{A}^{\rm{T}}{\bf{W}}\mathbf{A}+\lambda\mathbf{I}$, which is $(1+\lambda)\mathbf{I}$, thus the solution to the first order condition \eref{wproblem:l22w} can be obtained in an entry-wise closed form. When $L$ becomes large, \eref{wproblem:l22w} can still be solved fast and stably, as it is actually a scalar-vector multiplication. 
\begin{lemma}\label{thm:diagonal}
Let $\{\tilde{\Phi}_{\ell}\}_{\ell=0}^L$ be a class of orthonormal polynomials with the weight function $w(x)$, and $\mathcal{X}_{N+1}=\{x_0,x_1,\ldots,x_N\}$ be the set of zeros of $\tilde{\Phi}_{N+1}$. Assume $2L\leq2N+1$ and ${\rm \bf{w}}$ is a vector of weights satisfying the Gauss quadrature formula \eref{gauss}. Then
\begin{equation*}
\mathbf{A}^{\rm{T}}{\bf{W}}\mathbf{A}=\mathbf{I}\in\mathbb{R}^{(L+1)\times(L+1)}.
\end{equation*}
\end{lemma}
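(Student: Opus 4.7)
The plan is to compute the $(k,\ell)$ entry of $\mathbf{A}^{\rm T}\mathbf{W}\mathbf{A}$ directly and recognize it as a discrete inner product that, under the stated degree condition, coincides exactly with the $L_2$ inner product defining orthonormality. First I would unpack the definitions: since $\mathbf{W}$ is diagonal with entries $\omega_j$, a short matrix multiplication gives
\begin{equation*}
\bigl(\mathbf{A}^{\rm T}\mathbf{W}\mathbf{A}\bigr)_{k\ell}
=\sum_{j=0}^{N}\omega_j\tilde{\Phi}_k(x_j)\tilde{\Phi}_\ell(x_j),\qquad 0\leq k,\ell\leq L.
\end{equation*}

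Next I would use the degree hypothesis. The product $\tilde{\Phi}_k\tilde{\Phi}_\ell$ is a polynomial of degree $k+\ell\leq 2L\leq 2N+1$, so it lies in $\mathbb{P}_{2N+1}$. Consequently the Gauss quadrature exactness condition \eref{gauss} applies and converts the discrete sum into an exact integral:
\begin{equation*}
\sum_{j=0}^{N}\omega_j\tilde{\Phi}_k(x_j)\tilde{\Phi}_\ell(x_j)
=\int_{-1}^{1}w(x)\tilde{\Phi}_k(x)\tilde{\Phi}_\ell(x)\,\mathrm{d}x
=\langle\tilde{\Phi}_k,\tilde{\Phi}_\ell\rangle_{L_2}.
\end{equation*}
Finally, by the normalization $\tilde{\Phi}_\ell=\Phi_\ell/\|\Phi_\ell\|_{L_2}$ the family $\{\tilde{\Phi}_\ell\}$ is orthonormal with respect to $\langle\cdot,\cdot\rangle_{L_2}$, so this last integral equals the Kronecker delta $\delta_{k\ell}$. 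Assembling these identities for all $k,\ell\in\{0,\ldots,L\}$ yields $\mathbf{A}^{\rm T}\mathbf{W}\mathbf{A}=\mathbf{I}$.

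There is no real obstacle here: the proof is essentially the observation that Gauss quadrature is exact on polynomials up to degree $2N+1$, combined with orthonormality. The only point requiring any care is to verify the degree inequality $k+\ell\leq 2N+1$ explicitly from the hypothesis $2L\leq 2N+1$, and to make sure the normalization is used so that the diagonal entries are $1$ rather than $\|\Phi_\ell\|_{L_2}^2$. Both are mechanical, so I expect the proof to be essentially a two-line computation.
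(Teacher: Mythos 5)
Your proof is correct and follows exactly the same route as the paper's: compute the $(k,\ell)$ entry as a discrete sum, use $\tilde{\Phi}_k\tilde{\Phi}_\ell\in\mathbb{P}_{2L}\subset\mathbb{P}_{2N+1}$ to invoke the exactness of Gauss quadrature, and conclude by orthonormality. No gaps.
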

\begin{proof}
By the structure of $\mathbf{A}^{\rm{T}}{\bf{W}}\mathbf{A}$ and the exactness property \eref{gauss} of Gauss quadrature formula, we obtain
\begin{eqnarray*}
\left[\mathbf{A}^{\rm{T}}{\bf{W}}\mathbf{A}\right]_{\ell\ell'} = \sum_{j=0}^N\omega_j\tilde{\Phi}_{\ell}(x_j)\tilde{\Phi}_{\ell'}(x_j)= \int_{-1}^1w(x)\tilde{\Phi}_{\ell}(x)\tilde{\Phi}_{\ell'}(x){\rm d}x=\delta_{\ell\ell'},
\end{eqnarray*}
where $\delta_{\ell\ell'}$ is the Kronecker delta. The middle equality holds from $\tilde{\Phi}_{\ell}(x)\tilde{\Phi}_{\ell'}(x)\in\mathbb{P}_{2L}\subset\mathbb{P}_{2N+1}$, and the last equality holds because of the orthonormality of $\{\tilde{\Phi}_{\ell}\}_{\ell=0}^L$.
\end{proof}
\begin{theorem}\label{thm:l2}
Under the condition of Lemma \ref{thm:diagonal}, the optimal solution to the matrix-form Tikhonov regularized approximation problem \eref{problem:l2regumatrix} can be expressed by
\begin{equation}\label{beta:l2}
\beta_{\ell}=\frac{1}{1+\lambda}\sum_{j=0}^N\omega_j\tilde{\Phi}_{\ell}(x_j)f(x_j),\quad \ell=0,1,\ldots,L,\quad\lambda>0.
\end{equation}
Consequently, the Tikhonov regularized approximation polynomial defined by approximation problem \eref{problem:l2regu} is
\begin{equation}\label{p:l2}
p_{L,N+1}(x) =\frac{1}{1+\lambda}\sum_{\ell=0}^L\left(\sum_{j=0}^N\omega_j\tilde{\Phi}_{\ell}(x_j)f(x_j)\right)\tilde{\Phi}_{\ell}(x).
\end{equation}
\end{theorem}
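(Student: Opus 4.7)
The plan is to start directly from the first order condition \eref{wproblem:l22w} and exploit the diagonal structure provided by Lemma \ref{thm:diagonal}. Since $\lambda>0$, the objective function in \eref{problem:l2regumatrix} is a sum of a convex quadratic in $\bbeta$ and a strictly convex quadratic $\lambda\|\bbeta\|_2^2$, so the overall problem is strictly convex and the first order condition is both necessary and sufficient for the unique global minimizer. This settles uniqueness before any computation.

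Next I would substitute the conclusion of Lemma \ref{thm:diagonal}, namely $\mathbf{A}^{\rm T}\mathbf{W}\mathbf{A}=\mathbf{I}$, into \eref{wproblem:l22w}. The coefficient matrix collapses to $(1+\lambda)\mathbf{I}$, so the linear system decouples completely into $L+1$ independent scalar equations, and
\[
\bbeta=\frac{1}{1+\lambda}\mathbf{A}^{\rm T}\mathbf{W}\mathbf{f}.
\]
Reading off the $\ell$-th entry of $\mathbf{A}^{\rm T}\mathbf{W}\mathbf{f}$ using the definitions of $\mathbf{A}$, $\mathbf{W}$, and $\mathbf{f}$ yields the entry-wise formula \eref{beta:l2}.

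Finally, to obtain \eref{p:l2} I would insert this closed form for $\beta_\ell$ into the ansatz \eref{solutionp}, giving
\[
p_{L,N+1}(x)=\sum_{\ell=0}^L\beta_\ell\tilde{\Phi}_\ell(x)=\frac{1}{1+\lambda}\sum_{\ell=0}^L\left(\sum_{j=0}^N\omega_j\tilde{\Phi}_\ell(x_j)f(x_j)\right)\tilde{\Phi}_\ell(x),
\]
which is the stated expression. There is no real obstacle here: the whole theorem is essentially a corollary of Lemma \ref{thm:diagonal}, and the mild point worth flagging is simply that strict convexity (guaranteed by $\lambda>0$) is what turns the first order condition into a characterization of the unique optimum, so the closed-form solution is not merely a critical point but the global minimizer.
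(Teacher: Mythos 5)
Your proposal is correct and follows essentially the same route as the paper, which simply combines the first order condition \eref{wproblem:l22w} with Lemma \ref{thm:diagonal} to collapse the system to $(1+\lambda)\mathbf{I}\bbeta=\mathbf{A}^{\rm T}\mathbf{W}\mathbf{f}$. Your added remark that strict convexity (from $\lambda>0$) makes the first order condition sufficient for the unique global minimizer is a sensible bit of extra care that the paper leaves implicit.
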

\begin{proof}
This is immediately obtained from the first order condition \eref{wproblem:l22w} of the problem \eref{problem:l2regumatrix} and Lemma \ref{thm:diagonal}.
\end{proof}
\begin{remark}
When $\lambda=0$, coefficients reduce to 
\begin{equation*}
\beta_{\ell}=\sum_{j=0}^N\omega_j\tilde{\Phi}_{\ell}(x_j)f(x_j),\quad \ell=0,1,\ldots,L,
\end{equation*}
which are coefficients of hyperinterpolation on the interval $[-1,1]$ \cite{sloan1995polynomial}. Thus \eref{p:l2} could be regarded as a generalization of hyperinterpolation over the interval $[-1,1]$.
\end{remark}
In the limiting case $N\rightarrow\infty$, we have the following corollary.
\begin{corollary}\label{l2convergence}
We have the Tikhonov regularized approximation polynomial $p_{L,N+1}$ \eref{p:l2} has the uniform limit $p_{L,\infty}$ as $N\rightarrow\infty$, that is,
\begin{equation*}
\lim\limits_{N\rightarrow\infty}\|p_{L,N+1}-p_{L,\infty}\|_{\infty}=0,
\end{equation*}
where 
\begin{equation*}
p_{L,\infty}(x)=\frac{1}{1+\lambda}\sum_{\ell=0}^L\left(\int_{-1}^1w(x)\tilde{\Phi}_{\ell}(x)f(x){\rm{d}}x\right)\tilde{\Phi}_{\ell}(x).
\end{equation*}
\end{corollary}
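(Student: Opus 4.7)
The plan is to reduce the claim to the classical fact that Gauss quadrature converges to the corresponding weighted integral for every continuous integrand. Using the explicit form \eref{p:l2} from Theorem \ref{thm:l2}, I would write
\begin{equation*}
p_{L,N+1}(x) - p_{L,\infty}(x) = \frac{1}{1+\lambda}\sum_{\ell=0}^L E_N\!\left(\tilde{\Phi}_{\ell} f\right)\tilde{\Phi}_{\ell}(x),
\end{equation*}
where $E_N(g) := \sum_{j=0}^N \omega_j g(x_j) - \int_{-1}^1 w(t)g(t)\,{\rm d}t$ is the Gauss quadrature error functional. Taking the supremum over $x \in [-1,1]$ and applying the triangle inequality then yields
\begin{equation*}
\|p_{L,N+1} - p_{L,\infty}\|_\infty \;\leq\; \frac{1}{1+\lambda}\sum_{\ell=0}^L \left|E_N\!\left(\tilde{\Phi}_{\ell} f\right)\right| \|\tilde{\Phi}_{\ell}\|_\infty.
\end{equation*}
Since $L$ is fixed, the right-hand side is a finite sum with fixed polynomial factors $\|\tilde{\Phi}_{\ell}\|_\infty < \infty$, so the proof reduces to showing $E_N(\tilde{\Phi}_{\ell} f) \to 0$ as $N \to \infty$ for each $\ell \in \{0,1,\ldots,L\}$.

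For this, I would invoke the classical Stieltjes--P\'olya convergence theorem for Gauss quadrature applied to the continuous function $g := \tilde{\Phi}_{\ell} f \in \mathcal{C}([-1,1])$. The standard argument: given $\varepsilon > 0$, the Weierstrass approximation theorem supplies a polynomial $q$ with $\|g - q\|_\infty < \varepsilon$; for all $N$ large enough that $2N+1 \geq \deg q$, the exactness property \eref{gauss} gives $E_N(q) = 0$, and the positivity of the Gauss weights together with $\sum_{j=0}^N \omega_j = \int_{-1}^1 w(t)\,{\rm d}t$ (obtained by applying \eref{gauss} to the constant $1$) yields
\begin{equation*}
\left|E_N(g)\right| = \left|E_N(g-q)\right| \;\leq\; \left(\sum_{j=0}^N \omega_j + \int_{-1}^1 w(t)\,{\rm d}t\right)\|g - q\|_\infty \;\leq\; 2\varepsilon \int_{-1}^1 w(t)\,{\rm d}t.
\end{equation*}
Since $\varepsilon$ is arbitrary and $\int_{-1}^1 w(t)\,{\rm d}t < \infty$ by hypothesis, $E_N(g) \to 0$.

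The main obstacle is really just this invocation of Gauss quadrature convergence for continuous integrands; once that is in hand, the rest is bookkeeping because $L$ is held fixed while $N \to \infty$, so the finite sum over $\ell$ passes to zero term by term. A streamlined alternative would simply quote this convergence result directly from one of the cited references \cite{gautschi2011numerical,kress1998numerical} instead of reproducing the Stieltjes-type argument sketched above.
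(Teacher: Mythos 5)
Your proposal is correct and follows essentially the same route as the paper: both bound $\|p_{L,N+1}-p_{L,\infty}\|_\infty$ by a finite sum of Gauss quadrature errors for the continuous integrands $\tilde{\Phi}_{\ell}f$ (the paper uses a single constant $M$ in place of the individual factors $\|\tilde{\Phi}_{\ell}\|_\infty$) and then appeal to the convergence of Gauss quadrature on $\mathcal{C}([-1,1])$. The only difference is that you spell out the Stieltjes-type proof of that convergence, whereas the paper simply cites it as a known fact.
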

\begin{proof}
 Let $M:=\max_{0\leq\ell\leq L}
\{\sup_{x\in[-1,1]}|\tilde{\Phi}_{\ell}(x)|\}$, then $\|p_{L,N+1}-p_{L,\infty}\|_{\infty}$ is bounded above by 
\begin{equation*}
\frac{M}{1+\lambda}\sum_{\ell=0}^L\left|\sum_{j=0}^N\omega_j\tilde{\Phi}_{\ell}(x_j)f(x_j)-\int_{-1}^1w(x)\tilde{\Phi}_{\ell}(x)f(x){\rm d}x\right|,
\end{equation*}
which converges to $0$ as $N\rightarrow\infty$ due to the convergence of Gauss quadrature formula.
\end{proof}

\section{Tikhonov Regularized barycentric interpolation formula}\label{barycentricform}
Given the explicit Tikhonov regularized approximation polynomial \eref{p:l2}, we study Tikhonov regularized approximation when $L=N$ (note that $N+1$ interpolatory points lead to an interpolant of degree $N$). We focus on barycentric interpolation formula, a fast and stable interpolation scheme, which has been made popular by Berrut and Trefethen \cite{berrut2004barycentric} in recent years. This study gives birth to Tikhonov regularized modified Lagrange interpolation and Tikhonov regularized barycentric interpolation, which will be shown to share the same computational benefits and stability properties with their classical versions, but also to have properties inherited from Tikhonov regularization. 

The barycentric interpolation is based on the Lagrange interpolation, where the interpolant is written as
\begin{equation}\label{lagrangeform}
p_N(x)=\sum\limits_{j=0}^Nf(x_j)\ell_j(x),\quad\ell_j(x)=\prod_{k\neq j}\frac{x-x_k}{x_j-x_k},\quad j=0,1,\ldots,N.
\end{equation}
An interesting rewriting of \eref{lagrangeform} is
\begin{equation}\label{equ:mdf}
p_N^{\rm{mdf}}(x)=\ell(x)\sum\limits_{j=0}^N\frac{\Omega_j}{x-x_j}f(x_j),
\end{equation}
where $\ell(x)=(x-x_0)(x-x_1)\cdots(x-x_N)$, and
\begin{equation}\label{Omega}
\Omega_j=\frac{1}{\prod_{k\neq j}(x_j-x_k)},\quad j=0,1,\ldots,N
\end{equation}
are the so-called barycentric weights. Equation \eref{equ:mdf} has been called the ``modified Lagrange formula'' by Higham \cite{higham2004numerical} and the ``first form of the barycentric interpolation formula'' by Rutishauser \cite{rutishauser1990lectures}. There is also a more elegant formula. The function values $f(x_j)\equiv1$ are obviously interpolated by $p_N^{\rm{mdf}}(x)=1$, hence \eref{equ:mdf} gives 
\begin{equation}\label{equ:lx=1}
\ell(x)\sum_{j=0}^N\frac{\Omega_j}{x-x_j}=1.
\end{equation} 
Using this equation and eliminating $\ell(x)$ in \eref{equ:mdf} gives
\begin{equation}\label{equ:bary}
p_N^{\rm{bary}}(x)=\frac{\sum\limits_{j=0}^N\Omega_jf(x_j)/(x-x_j)}{\sum\limits_{j=0}^N\Omega_j/(x-x_j)},
\end{equation}
which is called the ``second form of the barycentric interpolation formula'' by Rutishauser \cite{rutishauser1990lectures}. For details of the above derivation, we refer to the review paper by Berrut and Trefethen \cite{berrut2004barycentric}. 

The evaluation of both formulae \eref{equ:mdf} and \eref{equ:bary} is so simple. If the weights $\{\Omega_j\}$ are known or can be carried out with $\mathcal{O}(N)$ operations, both formulae produce the interpolant value evaluated at $x$ with only $\mathcal{O}(N)$ operations. Indeed, computing the weights via \eref{Omega} requires $\mathcal{O}(N^2)$ operations. However, For Chebyshev points of the first or second kind, the barycentric weights are known analytically \cite{berrut2004barycentric,salzer1972lagrangian,schwarz1989numerical}, and for other type of Jacobi points, such as Legendre points, the barycentric weights are associated with the Gauss quadrature weights, and they can be carried out with $\mathcal{O}(N)$ operations \cite{wang2014explicit,wang2012convergence} with the aid of the fast Glaser–Liu–Rokhlin algorithm \cite{glaser2007a} for Gauss quadrature. The stability properties for both formulae were also investigated by Higham \cite{higham2004numerical}. Hence barycentric interpolation formulae are fast and stable interpolation schemes. 

We call formula \eref{equ:mdf} the ``modified Lagrange interpolation formula'' and formula \eref{equ:bary} the``barycentric interpolation formula'' to distinguish them, in order to avoid the usage the ``first'' and ``second''. In the mathematical derivation, we first derive the Tikhonov regularized barycentric interpolation formula, and then derive the Tikhonov regularized modified Lagrange interpolation formula, not following the chronological order of the development of both formulae.

The Tikhonov regularized approximation polynomial \eref{p:l2} when $L=N$ can be written as 
\begin{eqnarray}\label{minimizer1}
p_{N,N+1}(x)&=\sum\limits_{\ell=0}^N\frac{\sum_{j=0}^N\omega_j\tilde{\Phi}_{\ell}(x_j)f(x_j)}{1+\lambda}\tilde{\Phi}_{\ell}(x) \nonumber\\
&=\sum_{j=0}^N\omega_jf(x_j)\sum_{\ell=0}^N\frac{\tilde{\Phi}_{\ell}(x_j)\tilde{\Phi}_{\ell}(x)}{1+\lambda}.
\end{eqnarray}
From the orthonormality of $\{\tilde{\Phi}_{\ell}(x)\}_{\ell=0}^N$ we have
\begin{eqnarray*}
\sum\limits_{j=0}^N\omega_j\sum_{\ell=0}^N\tilde{\Phi}_{\ell}(x_j)\tilde{\Phi}_{\ell}(x)
&=\sum_{\ell=0}^N\left(\sum_{j=0}^N\omega_j\tilde{\Phi}_{\ell}(x_j)\cdot1\right)\tilde{\Phi}_{\ell}(x)\\
&=\sum_{\ell=0}^N\delta_{0\ell}\|\tilde{\Phi}_0(x)\|_{L_2}\tilde{\Phi}_{\ell}(x)=         \|\tilde{\Phi}_0(x)\|_{L_2}\tilde{\Phi}_{0}(x)=1.
\end{eqnarray*}
The last equality is due to $\tilde{\Phi}_{0}(x)=\Phi_0(x)/\|\tilde{\Phi}_0(x)\|_{L_2}$ and $\Phi_0(x)=1$ for any Jacobi polynomial of degree $0$ \cite{gautschi2004orthogonal,von1939orthogonal}.
Then polynomial \eref{minimizer1} can be rewritten as
\begin{equation}\label{l2frac}
p_{N,N+1}(x)=\frac{\sum\limits_{j=0}^N\left(\omega_j\sum\limits_{\ell=0}^N\tilde{\Phi}_{\ell}(x_j)\tilde{\Phi}_{\ell}(x)\right)f(x_j)}{(1+\lambda)\sum\limits_{j=0}^N\omega_j\sum\limits_{\ell=0}^N\tilde{\Phi}_{\ell}(x_j)\tilde{\Phi}_{\ell}(x)}.
\end{equation}
By Christoffel-Darboux formula \cite[Section 1.3.3]{gautschi2004orthogonal},
\begin{eqnarray*}
 \sum\limits_{\ell=0}^N\tilde{\Phi}_{\ell}(x)\tilde{\Phi}_{\ell}(x_j)
&=\nonumber\frac{\|\Phi_{N+1}(x)\|_{L_2}}{\|\Phi_N(x)\|_{L_2}}\frac{\tilde{\Phi}_{N+1}(x)\tilde{\Phi}_{N}(x_j)-\tilde{\Phi}_{N+1}(x_j)
\tilde{\Phi}_{N}(x)}{x-x_j}\\
&=\frac{\|\Phi_{N+1}(x)\|_{L_2}}{\|\Phi_N(x)\|_{L_2}}\frac{\tilde{\Phi}_{N+1}(x)\tilde{\Phi}_{N}(x_j)}{x-x_j},
\end{eqnarray*}
with the fact that $\{x_j\}_{j=0}^N$ are zeros of $\Phi_{N+1}(x)$. By substituting the above equation into \eref{l2frac} and eliminating the common factor $\|\Phi_{N+1}(x)\|_{L_2}\tilde{\Phi}_{N+1}(x)/\|\Phi_N(x)\|_{L_2}$, which is not dependent on the index $j$, from both the numerator and the denominator, \eref{l2frac} transforms to 
\begin{equation*}
p_{N,N+1}(x)=\frac{\sum\limits_{j=0}^N\omega_j\tilde{\Phi}_N(x_j)f(x_j)/(x-x_j)}{(1+\lambda)\sum\limits_{j=0}^N\omega_j\tilde{\Phi}_N(x_j)/(x-x_j)}.
\end{equation*}
As a matter of fact, Wang, Huybrechs and Vandewalle revealed a relation $\Omega_j=\omega_j\tilde{\Phi}_{N}(x_j)$ between the barycentric weight $\Omega_j$ and the Gauss quadrature weight $\omega_j$ at $x_j$ \cite{wang2014explicit}, which finally leads to the following Tikhonov regularized barycentric interpolation formula.
\begin{theorem}
\emph{Tikhonov regularized barycentric interpolation formula.} The polynomial interpolant through data $\{f(x_j)\}_{j=0}^N$ at $N+1$ points $\{x_j\}_{j=0}^N$ is given by 
\begin{equation}\label{L2regubary}
p_{N}^{\rm{Tik-bary}}(x)=\frac{\sum\limits_{j=0}^N\Omega_jf(x_j)/(x-x_j)}{(1+\lambda)\sum\limits_{j=0}^N\Omega_j/(x-x_j)},
\end{equation}
where the weights $\{\Omega_j\}$ are defined by \eref{Omega}.
\end{theorem}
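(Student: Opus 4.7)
The plan is to start from the explicit coefficient formula \eref{p:l2} specialized to $L=N$, which gives $p_{N,N+1}$ as a double sum over the basis index $\ell$ and the node index $j$. First I would swap the summation order to isolate $f(x_j)$, arriving at
\begin{equation*}
p_{N,N+1}(x)=\frac{1}{1+\lambda}\sum_{j=0}^N\omega_j f(x_j)\sum_{\ell=0}^N\tilde{\Phi}_{\ell}(x_j)\tilde{\Phi}_{\ell}(x),
\end{equation*}
so that the remaining work is to identify the inner kernel $K_N(x,x_j):=\sum_{\ell=0}^N\tilde{\Phi}_{\ell}(x_j)\tilde{\Phi}_{\ell}(x)$ with something having a $1/(x-x_j)$ structure.

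The next step is a normalization trick: I would show that $\sum_{j=0}^N\omega_j K_N(x,x_j)=1$ for every $x\in[-1,1]$. This follows because $K_N(x,\cdot)\in\mathbb{P}_N$ so Gauss quadrature is exact, and the $L_2$ integral against $1=\|\tilde{\Phi}_0\|_{L_2}\tilde{\Phi}_0$ picks out only the $\ell=0$ term by orthonormality. Inserting this identity as a denominator (a division by $1$) recasts $p_{N,N+1}$ as the ratio
\begin{equation*}
p_{N,N+1}(x)=\frac{\sum_{j=0}^N\omega_j K_N(x,x_j)f(x_j)}{(1+\lambda)\sum_{j=0}^N\omega_j K_N(x,x_j)},
\end{equation*}
which is the structural form needed to match \eref{L2regubary}.

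I would then collapse $K_N(x,x_j)$ by the Christoffel--Darboux identity, whose statement yields a closed form in terms of $\tilde{\Phi}_{N+1}$ and $\tilde{\Phi}_N$ evaluated at $x$ and $x_j$; the key simplification is that $\tilde{\Phi}_{N+1}(x_j)=0$ since the $x_j$'s are the Gauss points, so only a single term survives and the kernel becomes
\begin{equation*}
K_N(x,x_j)=\frac{\|\Phi_{N+1}\|_{L_2}}{\|\Phi_N\|_{L_2}}\cdot\frac{\tilde{\Phi}_{N+1}(x)\tilde{\Phi}_N(x_j)}{x-x_j}.
\end{equation*}
The prefactor depending only on $x$ is independent of $j$, hence cancels between numerator and denominator, leaving a ratio whose $j$-th summand weight is $\omega_j\tilde{\Phi}_N(x_j)/(x-x_j)$.

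The final step is to invoke the Wang--Huybrechs--Vandewalle identity $\Omega_j=\omega_j\tilde{\Phi}_N(x_j)$ relating the barycentric weights \eref{Omega} to the Gauss weights at Jacobi-type nodes; substitution then gives exactly \eref{L2regubary}. I expect the main obstacle to be the bookkeeping in the Christoffel--Darboux step: ensuring that the $x$-dependent but $j$-independent factor is identified correctly so that it cancels cleanly, and that the vanishing $\tilde{\Phi}_{N+1}(x_j)=0$ is applied only to the half of the identity that involves $x_j$, not to $\tilde{\Phi}_{N+1}(x)$, which remains alive (and cancels) in both numerator and denominator.
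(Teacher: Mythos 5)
Your proposal is correct and follows essentially the same route as the paper: interchange the sums in \eref{p:l2} with $L=N$, use Gauss exactness and orthonormality to show the kernel sums to $1$ (so it can be inserted as a denominator), collapse the kernel via Christoffel--Darboux using $\tilde{\Phi}_{N+1}(x_j)=0$, cancel the $j$-independent factor, and finish with the identity $\Omega_j=\omega_j\tilde{\Phi}_N(x_j)$ of Wang, Huybrechs and Vandewalle. No gaps.
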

\begin{proof}
Given in the discussion above.
\end{proof}

Multiplying the Tikhonov regularized barycentric interpolation formula \eref{L2regubary} by equation \eref{equ:lx=1} gives the Tikhonov regularized modified Lagrange interpolation formula.
\begin{theorem}
\emph{Tikhonov regularized modified Lagrange interpolation formula.} The polynomial interpolant through data $\{f(x_j)\}_{j=0}^N$ at $N+1$ points $\{x_j\}_{j=0}^N$ is given by 
\begin{equation}\label{L2mdf}
p_N^{\rm{Tik-mdf}}(x)=\frac{\ell(x)}{1+\lambda}\sum\limits_{j=0}^N\frac{\Omega_j}{x-x_j}f(x_j),
\end{equation}
where the weights $\{\Omega_j\}$ are defined by \eref{Omega}.
\end{theorem}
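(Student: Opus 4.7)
The plan is to obtain \eref{L2mdf} by a one-step algebraic manipulation of the Tikhonov regularized barycentric formula \eref{L2regubary}, using the node-independent identity \eref{equ:lx=1}, which states that $\ell(x)\sum_{j=0}^{N}\Omega_j/(x-x_j)=1$. This identity was already established in the excerpt as a consequence of the fact that the classical Lagrange interpolant of the constant function $1$ at any $N+1$ distinct nodes is $1$, and it holds for every $x\in[-1,1]\setminus\mathcal{X}_{N+1}$.

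Concretely, I would first rewrite \eref{equ:lx=1} as
\[
\sum_{j=0}^N \frac{\Omega_j}{x-x_j} = \frac{1}{\ell(x)},\qquad x\notin\mathcal{X}_{N+1},
\]
and substitute this expression into the denominator of \eref{L2regubary}. That denominator becomes $(1+\lambda)/\ell(x)$; dividing by it is equivalent to multiplying by $\ell(x)/(1+\lambda)$. This immediately yields
\[
p_N^{\rm{Tik-bary}}(x)\;=\;\frac{\ell(x)}{1+\lambda}\sum_{j=0}^N \frac{\Omega_j}{x-x_j}f(x_j),
\]
which is exactly the right-hand side of \eref{L2mdf}.

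Finally I would briefly verify that the right-hand side of \eref{L2mdf} is in fact a polynomial of degree at most $N$: each summand $\Omega_j\,\ell(x)/(x-x_j)$ is a polynomial of degree $N$ because the factor $(x-x_j)$ divides $\ell(x)=\prod_{k=0}^N(x-x_k)$. Consequently, the formula extends by continuity across the nodes $x_j$, and the two sides of the identity agree as polynomials of degree at most $N$ on all of $[-1,1]$.

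Since the derivation is a single substitution, there is essentially no obstacle; the only care required is the standard removable-singularity observation at the interpolation nodes, which also explains why \eref{L2mdf} inherits the good conditioning properties of its classical ($\lambda=0$) counterpart, the multiplicative factor $1/(1+\lambda)$ being merely a global rescaling.
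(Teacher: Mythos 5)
Your proposal is correct and follows essentially the same route as the paper, which obtains \eref{L2mdf} by multiplying the regularized barycentric formula \eref{L2regubary} by the identity \eref{equ:lx=1}; substituting $\sum_j\Omega_j/(x-x_j)=1/\ell(x)$ into the denominator is the same one-step manipulation. Your added remarks on the removable singularities at the nodes and on the degree of the resulting polynomial are harmless extras that the paper leaves implicit.
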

\begin{proof}
Given in the described multiplication above the theorem. 
\end{proof}

That's it! The Tikhonov regularization only brings a multiplicative correction $1/(1+\lambda)$ into both modified Lagrange interpolation formula and barycentric interpolation formula, hence the computational benefits and stability properties for the classical version of both formulae are kept in the Tikhonov regularized version, the properties of Tikhonov regularization are also conferred to both regularized formulae. If $\lambda=0$, formulae \eref{L2mdf} and \eref{L2regubary} reduce to classical modified Lagrange interpolation formula \eref{equ:mdf} and classical barycentric interpolation formula \eref{equ:bary}, respectively.

\section{Approximation quality}\label{l2quality}
We then study the quality of the Tikhonov regularized approximation in terms of two kinds of norms and in the presence of noise. We denote by $f^{\epsilon}$ a noisy $f$, and regard both $f$ and $f^{\epsilon}$ as continuous for the following analysis. Regarding the noisy version $f^{\epsilon}$ as continuous is convenient for theoretical analysis, and is always adopted by other scholars in the field of approximation, see, for example, \cite{pereverzyev2015parameter}. We adopt this trick, and investigate the approximation properties in the sense of uniform error and $L_2$ error, respectively, that is, the uniform norm $\|f\|_{\infty}=\max_{x\in[-1,1]}|f(x)|$ and the $L_2$ norm \eref{equ:l2norm} are involved. The error of best approximation of $f$ by an element $p$ of $\mathbb{P}_L$ is also involved, which is defined by
\begin{equation*}
E_L(f):=\inf_{p\in\mathbb{P}_L}\|f-p\|_{\infty},\quad f\in \mathcal{C}([-1,1]).
\end{equation*}
By Weierstrass approximation theorem, $E_L(f)\rightarrow0$ as $L\rightarrow\infty$. We denote by $p^*$ the best approximation polynomial of degree $L$ to $f$, i.e., $E_L(f)=\|f-p^*\|_{\infty}$.

The approximation polynomial \eref{p:l2} can be deemed as an operator $\mathcal{U}_{\lambda,L,N+1}:\mathcal{C}([-1,1])\rightarrow L_2([-1,1])$ acting on $f$, i.e.,
\begin{equation*}
p_{L,N+1}(x):=\mathcal{U}_{\lambda,L,N+1}f(x):=\sum\limits_{\ell=0}^L\beta_{\ell}\tilde{\Phi}_{\ell}(x).
\end{equation*}
We can define the $L_2$ norm of the operator
\begin{equation*}
\|\mathcal{U}_{\lambda,L,N+1}\|_{L_2}:=\underset{f\neq 0}\sup\frac{\|\mathcal{U}_{\lambda,L,N+1}f\|_{L_2}}{\|f\|_{\infty}}=\underset{f\neq 0}\sup\frac{\|p_{L,N+1}\|_{L_2}}{\|f\|_{\infty}},
\end{equation*}
and the uniform norm 
\begin{equation}\label{equ:lebesgueconstant}
\|\mathcal{U}_{\lambda,L,N+1}\|_{\infty}:=\underset{f\neq 0}\sup\frac{\|\mathcal{U}_{\lambda,L,N+1}f\|_{\infty}}{\|f\|_{\infty}}=\underset{f\neq 0}\sup\frac{\|p_{L,N+1}\|_{\infty}}{\|f\|_{\infty}}.
\end{equation}
The uniform norm is none other than the Lebesgue constant (see, for example, \cite{rivlin1980introduction}), which is a tool for quantifying the divergence or convergence of polynomial approximation. 

When $\lambda=0$, the approximation polynomial reduces to the hyperinterpolation polynomial \cite{sloan1995polynomial} on $[-1,1]$:
\begin{equation}\label{p:hyperinterpolation}
\mathcal{U}_{0,L,N+1}f(x)=\sum\limits_{\ell=0}^L\left(\sum_{j=0}^N\omega_j\tilde{\Phi}_{\ell}(x_j)f(x_j)\right)\tilde{\Phi}_{\ell}(x).
\end{equation}
Apparently, given $\|\mathcal{U}_{0,L,N+1}\|_{L_2}$ and $\|\mathcal{U}_{0,L,N+1}\|_{\infty}$, Tikhonov regularization reduces both operator norms by introducing a correction factor $1/(1+\lambda)$ as $\|\mathcal{U}_{\lambda,L,N+1}f\|=\|\mathcal{U}_{0,L,N+1}f\|/(1+\lambda)$. However, the factor cannot be used for reducing approximation error, see the following analysis. When the level of noise is relatively small, it has been studied that one can directly perform denoising tasks without regularization, see, for example, \cite{hesse2017radial,le2008localized}. What is interesting for the following analysis is that Tikhonov regularization reduces operator norms but it enlarges approximation errors, and it brings a trade-off on the errors when there exists noise.

\subsection{$L_2$ norm and $L_2$ error}
Recall that the weight function $w(x)$ satisfies $\int_{-1}^1w(x){\rm d}x<\infty$, we may just as well denote by $V$ the integral. With the aid of the exactness \eref{gauss} of Gauss quadrature, we have $V=\sum_{j=0}^N\omega_j$. As a special case on the interval $[-1,1]$, Theorem 1 in \cite{sloan1995polynomial} is stated as the following lemma.
\begin{lemma}\label{lem:L2}
Let $2L\leq2N+1$. Given $f\in\mathcal{C}([-1,1])$, and let $\mathcal{U}_{0,L,N+1}f\in\mathbb{P}_L$ be defined by \eref{p:hyperinterpolation}. Then
\begin{equation}\label{equ:stabilitynoregu}
\left\|\mathcal{U}_{0,L,N+1}f\right\|_{L_2}\leq V^{1/2}\|f\|_{\infty}.
\end{equation}
\end{lemma}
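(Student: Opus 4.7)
The plan is to follow Sloan's classical argument for hyperinterpolation, adapting it to the $[-1,1]$ setting by exploiting the Gauss quadrature exactness \eref{gauss} in three places.

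First, I would introduce the discrete semi-inner product $\langle g,h\rangle_N := \sum_{j=0}^N \omega_j g(x_j) h(x_j)$. By the exactness \eref{gauss}, whenever $gh\in\mathbb{P}_{2N+1}$ one has $\langle g,h\rangle_N = \langle g,h\rangle_{L_2}$. In particular, since $2L\le 2N+1$, this identity holds for all pairs of polynomials in $\mathbb{P}_L$, and so Lemma \ref{thm:diagonal} already encodes the fact that $\{\tilde{\Phi}_\ell\}_{\ell=0}^L$ is orthonormal in \emph{both} the continuous and the discrete inner product.

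Second, I would unpack the definition \eref{p:hyperinterpolation}: the coefficient $\beta_\ell=\langle f,\tilde{\Phi}_\ell\rangle_N$, which immediately gives the \emph{discrete projection} identity
\begin{equation*}
\langle \mathcal{U}_{0,L,N+1}f,\,p\rangle_N = \langle f,\,p\rangle_N \qquad \forall p\in\mathbb{P}_L,
\end{equation*}
because one can check it on each basis element $\tilde{\Phi}_\ell$. Specializing $p=\mathcal{U}_{0,L,N+1}f\in\mathbb{P}_L$ and then converting the left-hand side to the continuous norm via Gauss exactness (since $(\mathcal{U}_{0,L,N+1}f)^2\in\mathbb{P}_{2L}\subseteq\mathbb{P}_{2N+1}$) yields
\begin{equation*}
\|\mathcal{U}_{0,L,N+1}f\|_{L_2}^2 = \langle f,\,\mathcal{U}_{0,L,N+1}f\rangle_N .
\end{equation*}

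Third, applying the Cauchy--Schwarz inequality to $\langle\cdot,\cdot\rangle_N$ bounds the right-hand side by $\langle f,f\rangle_N^{1/2}\|\mathcal{U}_{0,L,N+1}f\|_{L_2}$ (again using Gauss exactness to rewrite the discrete norm of the image as the $L_2$ norm). Dividing through and then using the trivial pointwise bound $f(x_j)^2\le\|f\|_\infty^2$ together with $\sum_{j=0}^N\omega_j=V$ completes the proof:
\begin{equation*}
\|\mathcal{U}_{0,L,N+1}f\|_{L_2}\le \Bigl(\sum_{j=0}^N\omega_j f(x_j)^2\Bigr)^{1/2}\le V^{1/2}\|f\|_\infty.
\end{equation*}

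There is no real obstacle here; the only point requiring care is checking that each appeal to Gauss exactness stays inside $\mathbb{P}_{2N+1}$ (the products $\tilde{\Phi}_\ell\tilde{\Phi}_{\ell'}$, $(\mathcal{U}_{0,L,N+1}f)^2$, and $\tilde{\Phi}_\ell\cdot\mathcal{U}_{0,L,N+1}f$ all have degree at most $2L$), which is guaranteed by the standing assumption $2L\le 2N+1$.
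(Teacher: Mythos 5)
Your proof is correct and is precisely the classical argument of Sloan's Theorem~1, which the paper invokes by citation without reproducing the proof: discrete orthonormality of $\{\tilde{\Phi}_{\ell}\}_{\ell=0}^{L}$ via Gauss exactness, the resulting discrete-projection identity, Cauchy--Schwarz for the (positive-weight) discrete semi-inner product, and the bound $\sum_{j=0}^{N}\omega_j f(x_j)^2\leq V\|f\|_{\infty}^2$. All three appeals to exactness stay within $\mathbb{P}_{2N+1}$ exactly as you note, so there is no gap; this matches the proof underlying the paper's Lemma~\ref{lem:L2}.
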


\noindent With this lemma, we show Tikhonov regularization can reduce the $L_2$ norm of operator $\mathcal{U}_{\lambda,L,N+1}$ but it enlarges the approximation error $\|\mathcal{U}_{\lambda,L,N+1}f-f\|_{L_2}$.
\begin{proposition}\label{thm:L2}
Let $2L\leq2N+1$. Given $f\in\mathcal{C}([-1,1])$, and let $\mathcal{U}_{\lambda,L,N+1}f\in\mathbb{P}_L$ be defined by \eref{p:l2}. Then
\begin{equation}\label{equ:stability}
\left\|\mathcal{U}_{\lambda,L,N+1}f\right\|_{L_2}\leq \frac{V^{1/2}}{1+\lambda}\|f\|_{\infty},
\end{equation}
and 
\begin{equation}\label{equ:L2error}
\|\mathcal{U}_{\lambda,L,N+1}f-f\|_{L_2}\leq\left(1+\frac{1}{1+\lambda}\right)E_L(f)+\frac{\lambda}{1+\lambda}\|p^*\|_{L_2}.
\end{equation}
Thus
\begin{equation*}\label{equ:L2errorlimits}
\lim_{L\rightarrow\infty}\|\mathcal{U}_{\lambda,L,N+1}f-f\|_{L_2}\leq\frac{\lambda}{1+\lambda}\|p^*\|_{L_2}.
\end{equation*}
\end{proposition}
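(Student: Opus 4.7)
The plan is first to notice that the explicit representation (2.9) differs from the classical hyperinterpolation operator (4.7) only by the prefactor $1/(1+\lambda)$, so the two operators are proportional:
\[
\mathcal{U}_{\lambda,L,N+1} = \frac{1}{1+\lambda}\,\mathcal{U}_{0,L,N+1}.
\]
Inequality (4.3) is then immediate: apply Lemma \ref{lem:L2} to $\mathcal{U}_{0,L,N+1}f$ and divide by $1+\lambda$. No further work is needed for the first claim.

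For (4.4), the key structural fact I will establish is a \emph{rescaled reproducing property}: for every $q \in \mathbb{P}_L$,
\[
\mathcal{U}_{\lambda,L,N+1}\,q = \frac{1}{1+\lambda}\,q.
\]
To prove this I expand $q = \sum_{\ell=0}^L a_\ell \tilde{\Phi}_\ell$, substitute into formula (2.9), and use Lemma \ref{thm:diagonal} — which is exactly the statement that the discrete inner product $\sum_j \omega_j \tilde{\Phi}_\ell(x_j)\tilde{\Phi}_k(x_j)$ equals $\delta_{\ell k}$ whenever $2L \leq 2N+1$ — to pick out $a_\ell/(1+\lambda)$ as the new coefficient. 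Then I decompose via the triangle inequality around the best polynomial $p^*$:
\[
\|\mathcal{U}_{\lambda,L,N+1}f - f\|_{L_2} \leq \|\mathcal{U}_{\lambda,L,N+1}(f - p^*)\|_{L_2} + \|\mathcal{U}_{\lambda,L,N+1}p^* - p^*\|_{L_2} + \|p^* - f\|_{L_2}.
\]
The first term is bounded by $\frac{1}{1+\lambda}E_L(f)$ (up to the absorbed constant $V^{1/2}$) using (4.3) applied to the continuous function $f-p^*$, since $\|f-p^*\|_\infty = E_L(f)$. The middle term equals $\frac{\lambda}{1+\lambda}\|p^*\|_{L_2}$ exactly, by the reproducing property above. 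The third term is bounded by $E_L(f)$ via the standard comparison $\|\cdot\|_{L_2} \leq V^{1/2}\|\cdot\|_\infty$ on $[-1,1]$. Summing the three estimates yields (4.4).

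The limit statement (4.5) then follows by sending $L \to \infty$ in (4.4): by Weierstrass' theorem $E_L(f) \to 0$, so the first summand vanishes, leaving only $\frac{\lambda}{1+\lambda}\|p^*\|_{L_2}$ in the limsup (with the implicit understanding that $\|p^*\|_{L_2} \to \|f\|_{L_2}$ as $p^* \to f$ uniformly). I do not expect a serious obstacle in any single step; the only delicate point is bookkeeping of the constant $V^{1/2}$ from the $\|\cdot\|_{L_2}\leq V^{1/2}\|\cdot\|_\infty$ embedding, which the stated inequality evidently absorbs into the $E_L(f)$ coefficient. The conceptual content is simply that $\mathcal{U}_{\lambda,L,N+1}$ is a damped version of an orthogonal projection, and the damping is the source of both the operator-norm reduction in (4.3) and the irreducible bias $\frac{\lambda}{1+\lambda}\|p^*\|_{L_2}$ in (4.4)–(4.5).
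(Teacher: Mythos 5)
Your proposal is correct and follows essentially the same route as the paper: the proportionality $\mathcal{U}_{\lambda,L,N+1}=\mathcal{U}_{0,L,N+1}/(1+\lambda)$ for \eref{equ:stability}, the triangle-inequality decomposition around $p^*$ using the rescaled reproducing property $\mathcal{U}_{\lambda,L,N+1}p=p/(1+\lambda)$ and the embedding $\|\cdot\|_{L_2}\leq V^{1/2}\|\cdot\|_{\infty}$ for \eref{equ:L2error}. The only cosmetic difference is that you verify the reproducing property directly from Lemma \ref{thm:diagonal} rather than citing the hyperinterpolation literature, and you (like the paper's own proof) end up with a factor $V^{1/2}$ multiplying the $E_L(f)$ terms that the stated inequality silently drops.
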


\begin{proof}
The stability result \eref{equ:stability} follows from $
\left\|\mathcal{U}_{\lambda,L,N+1}f\right\|_{L_2}=\left\|\mathcal{U}_{0,L,N+1}f\right\|_{L_2}/(1+\lambda)$ and Lemma \ref{lem:L2}. Note that for all $g\in \mathcal{C}([-1,1])$, from Cauchy-Schwarz inequality there exists $\|g\|_{L_2}=\sqrt{\left<g,g\right>_{L_2}}\leq\|g\|_{\infty}\sqrt{\left<1,1\right>_{L_2}}=V^{1/2}\|g\|_{\infty}$, and also note that for all $p\in\mathbb{P}_L$, $\mathcal{U}_{\lambda,L,N+1}p\neq p$ but from \eref{p:l2} we obtain
\begin{equation*}
\mathcal{U}_{\lambda,L,N+1}p=(\mathcal{U}_{0,L,N+1}p)/(1+\lambda)=p/(1+\lambda)
\end{equation*}
as $\mathcal{U}_{0,L,N+1}p=p$ (shown in \cite[Lemma 4]{sloan1995polynomial}). Then for any polynomial $p\in\mathbb{P}_L$, 
\begin{eqnarray*}
\fl \|\mathcal{U}_{\lambda,L,N+1}f-f\|_{L_2}&=\|\mathcal{U}_{\lambda,L,N+1}(f-p)-(f-p)-(p-\mathcal{U}_{\lambda,L,N+1}p)\|_{L_2}\\
&\leq\|\mathcal{U}_{\lambda,L,N+1}(f-p)\|_{L_2}+\|f-p\|_{L_2}+\|p-\mathcal{U}_{\lambda,L,N+1}p\|_{L_2}\\
&\leq\frac{V^{1/2}}{1+\lambda}\|f-p\|_{\infty}+V^{1/2}\|f-p\|_{\infty}+\frac{\lambda}{1+\lambda}\|p\|_{L_2}.
\end{eqnarray*}
As the above inequality holds for any polynomials, letting $p$ be $p^*$ leads to \eref{equ:L2error}.
\end{proof}
Proposition \ref{thm:L2} indicates that when there is no noise, we should avoid introducing regularization; however, when data are contaminated by noise, Tikhonov regularization can reduce a new error term introduced by noise.  
\begin{theorem}\label{thm:L2noise}
Let $2L\leq2N+1$. Given $f\in\mathcal{C}([-1,1])$ and its noisy version $f^{\epsilon}\in\mathcal{C}([-1,1])$, and let $\mathcal{U}_{\lambda,L,N+1}f\in\mathbb{P}_L$ be defined by \eref{p:l2}. Then
\begin{equation}\label{equ:L2errornoise}
\fl \|\mathcal{U}_{\lambda,L,N+1}f^{\epsilon}-f\|_{L_2}\leq\frac{V^{1/2}}{1+\lambda}\|f-f^{\epsilon}\|_{\infty}+\left(1+\frac{1}{1+\lambda}\right)E_L(f)+\frac{\lambda}{1+\lambda}\|p^*\|_{L_2}.
\end{equation}
\end{theorem}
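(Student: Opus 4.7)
The plan is to reduce Theorem \ref{thm:L2noise} to Proposition \ref{thm:L2} by a standard noise-splitting trick. First I would write
\begin{equation*}
\mathcal{U}_{\lambda,L,N+1}f^{\epsilon} - f \;=\; \mathcal{U}_{\lambda,L,N+1}(f^{\epsilon} - f) \;+\; \bigl(\mathcal{U}_{\lambda,L,N+1}f - f\bigr),
\end{equation*}
which makes sense because $\mathcal{U}_{\lambda,L,N+1}$ is linear in its argument (a fact visible directly from the entry-wise formula \eref{beta:l2}, since each $\beta_\ell$ depends linearly on the sampled values). Applying the $L_2$ triangle inequality to this decomposition isolates a pure noise-propagation term and the noise-free approximation error.

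Next I would bound the first piece by the stability estimate \eref{equ:stability} of Proposition \ref{thm:L2}, applied to the continuous function $f^{\epsilon}-f$ in place of $f$; this immediately produces the term $\frac{V^{1/2}}{1+\lambda}\|f-f^{\epsilon}\|_{\infty}$ and explains why the noise contribution is damped by the Tikhonov factor $1/(1+\lambda)$. For the second piece I would invoke the bound \eref{equ:L2error} already established in Proposition \ref{thm:L2}, which supplies the remaining two terms $\bigl(1+\tfrac{1}{1+\lambda}\bigr)E_L(f)$ and $\tfrac{\lambda}{1+\lambda}\|p^*\|_{L_2}$. Summing the two bounds yields \eref{equ:L2errornoise}.

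There is essentially no obstacle here: the only thing to check carefully is that the decomposition is legitimate, i.e., that $\mathcal{U}_{\lambda,L,N+1}$ is a linear operator on $\mathcal{C}([-1,1])$ with the same constants $\omega_j$, $\tilde{\Phi}_\ell(x_j)$ appearing in \eref{beta:l2} regardless of the input function, so that \eref{equ:stability} may be re-used with argument $f^\epsilon-f\in\mathcal{C}([-1,1])$. Since both $f$ and $f^{\epsilon}$ are assumed continuous (as explicitly noted before the theorem), this is immediate, and the theorem follows by a one-line triangle inequality plus two citations to Proposition \ref{thm:L2}.
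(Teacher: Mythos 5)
Your proof is correct and yields exactly the stated bound, but it organizes the argument differently from the paper. The paper does not split off the noise first; instead it repeats the three-term decomposition from Proposition \ref{thm:L2} with an arbitrary $p\in\mathbb{P}_L$, namely $\mathcal{U}_{\lambda,L,N+1}f^{\epsilon}-f=\mathcal{U}_{\lambda,L,N+1}(f^{\epsilon}-p)-(f-p)-(p-\mathcal{U}_{\lambda,L,N+1}p)$, bounds the three pieces by $\frac{V^{1/2}}{1+\lambda}\|f^{\epsilon}-p\|_{\infty}+V^{1/2}\|f-p\|_{\infty}+\frac{\lambda}{1+\lambda}\|p\|_{L_2}$, and only at the end inserts $\|f^{\epsilon}-p\|_{\infty}\leq\|f^{\epsilon}-f\|_{\infty}+\|f-p\|_{\infty}$ and sets $p=p^*$. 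Your decomposition $\mathcal{U}_{\lambda,L,N+1}(f^{\epsilon}-f)+(\mathcal{U}_{\lambda,L,N+1}f-f)$ performs the noise split at the operator level instead, which lets you quote \eref{equ:stability} for the first piece and \eref{equ:L2error} wholesale for the second; a quick expansion shows the two routes produce term-for-term the same right-hand side. What your version buys is modularity --- Theorem \ref{thm:L2noise} becomes a two-line corollary of Proposition \ref{thm:L2} --- at the cost of relying on that proposition as a black box; the paper's version is self-contained and makes visible exactly where each of the three error contributions originates. You are also right that the only point needing care is the linearity of $\mathcal{U}_{\lambda,L,N+1}$ on $\mathcal{C}([-1,1])$ and the continuity of $f^{\epsilon}-f$, both of which hold here. (One cosmetic remark: the displayed bound \eref{equ:L2error}, and hence \eref{equ:L2errornoise}, silently drops the factor $V^{1/2}$ in front of $E_L(f)$ relative to what either derivation actually gives; that discrepancy is inherited from the paper's own statement of Proposition \ref{thm:L2} and is not a defect of your argument.)
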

\begin{proof}
For any polynomial $p\in\mathbb{P}_L$, 
\begin{eqnarray*}
\fl \|\mathcal{U}_{\lambda,L,N+1}f^{\epsilon}-f\|_{L_2}&=\|\mathcal{U}_{\lambda,L,N+1}(f^{\epsilon}-p)-(f-p)-(p-\mathcal{U}_{\lambda,L,N+1}p)\|_{L_2}\\
&\leq\|\mathcal{U}_{\lambda,L,N+1}(f^{\epsilon}-p)\|_{L_2}+\|f-p\|_{L_2}+\|p-\mathcal{U}_{\lambda,L,N+1}p\|_{L_2}\\
&\leq\frac{V^{1/2}}{1+\lambda}\|f^{\epsilon}-p\|_{\infty}+V^{1/2}\|f-p\|_{\infty}+\frac{\lambda}{1+\lambda}\|p\|_{L_2}.
\end{eqnarray*}
Estimating $\|f^{\epsilon}-p\|_{\infty}$ by $\|f^{\epsilon}-p\|_{\infty}\leq\|f^{\epsilon}-f\|_{\infty}+\|f-p\|_{\infty}$ and letting $p$ be $p^*$ lead to \eref{equ:L2errornoise}.
\end{proof}
\begin{remark}
When there exists noise and $\lambda=0$, there holds
\begin{equation*}
\|\mathcal{U}_{0,L,N+1}f^{\epsilon}-f\|_{L_2}\leq V^{1/2}\|f-f^{\epsilon}\|_{\infty}+2E_L(f),
\end{equation*}
which enlarges the part $V^{1/2}\|f-f^{\epsilon}\|_{\infty}/({1+\lambda})+\left(1+1/(1+\lambda)\right)E_L(f)$ in \eref{equ:L2errornoise} but vanishes the part $\lambda\|p^*\|_{L_2}/(1+\lambda)$. Hence there should be a trade-off strategy for $\lambda$ in practice.
\end{remark}

\subsection{Uniform norm (Lebesgue constant) and uniform error}
The uniform case provides the similar information on the Tikhonov regularization as the $L_2$ case. Let 
\begin{equation}\label{lebesguehyperintepolation}
\Lambda_L:=\underset{f\neq 0}\sup\frac{\|\mathcal{U}_{0,L,N+1}f\|_{\infty}}{\|f\|_{\infty}}
\end{equation}
be the Lebesgue constant for hyperinterpolation $\mathcal{U}_{0,L,N+1}$ of degree $L$. It is obvious that Tikhonov regularization can reduce the Lebesgue constant \eref{lebesguehyperintepolation}.
\begin{proposition}\label{thm:reducelebesgue}
Let $\Lambda_L$ be the Lebesgue constant for hyperinterpolation $\mathcal{U}_{0,L,N+1}$ of $\mathcal{C}([-1,1])$ onto $\mathbb{P}_{L}$, and let $\Lambda_{\lambda,L}$ be the Lebesgue constant for Tikhonov regularized approximation $\mathcal{U}_{\lambda,L,N+1}$ of $\mathcal{C}([-1,1])$ onto $\mathbb{P}_{L}$. Then
\begin{equation*}
\Lambda_{\lambda,L}:=\|\mathcal{U}_{\lambda,L,N+1}\|_{\infty}=\frac{1}{1+\lambda}\Lambda_L.
\end{equation*}
\end{proposition}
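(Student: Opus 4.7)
The plan is to exploit the fact, already apparent from the explicit coefficient formula \eqref{beta:l2}, that the Tikhonov-regularized operator differs from hyperinterpolation by a pure scalar factor. Concretely, comparing the formulas \eqref{p:l2} and \eqref{p:hyperinterpolation} for $\mathcal{U}_{\lambda,L,N+1}f$ and $\mathcal{U}_{0,L,N+1}f$, every coefficient $\beta_\ell$ of the former is exactly $1/(1+\lambda)$ times the corresponding coefficient of the latter. Hence, as an identity of polynomials,
\begin{equation*}
\mathcal{U}_{\lambda,L,N+1}f \;=\; \frac{1}{1+\lambda}\,\mathcal{U}_{0,L,N+1}f \qquad \forall\, f\in\mathcal{C}([-1,1]).
\end{equation*}

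From here the rest is essentially one line. First I would take the uniform norm of both sides, pulling the positive scalar $1/(1+\lambda)$ out to obtain $\|\mathcal{U}_{\lambda,L,N+1}f\|_{\infty} = \frac{1}{1+\lambda}\|\mathcal{U}_{0,L,N+1}f\|_{\infty}$. Then I would divide by $\|f\|_{\infty}$ (for any $f\neq 0$) and take the supremum, invoking the definition \eqref{equ:lebesgueconstant} of the operator norm on the left and the definition \eqref{lebesguehyperintepolation} of $\Lambda_L$ on the right. Since the constant factor $1/(1+\lambda)$ is independent of $f$, it commutes with the supremum, yielding $\Lambda_{\lambda,L}=\Lambda_L/(1+\lambda)$.

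There is no genuine obstacle: the main content was already established in Theorem \ref{thm:l2}, which produced the closed-form coefficients, and the scalar-factor relation is an immediate corollary. The only thing to be a little careful about is that the supremum in \eqref{equ:lebesgueconstant} is taken over $f\in\mathcal{C}([-1,1])\setminus\{0\}$ (not over polynomials), so one must note that the identity above holds on the full continuous function space, which it clearly does since both operators are defined on $\mathcal{C}([-1,1])$.
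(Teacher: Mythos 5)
Your proof is correct and follows essentially the same route as the paper: both rest on the scalar identity $\mathcal{U}_{\lambda,L,N+1}f=\mathcal{U}_{0,L,N+1}f/(1+\lambda)$ and then pull the constant through the supremum defining the operator norm. (Incidentally, you state that identity in the correct orientation, whereas the paper's proof has the factor on the wrong side in its displayed relation; the conclusion is unaffected.)
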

\begin{proof}
For any $f\in\mathcal{C}([-1,1])$, there holds $\mathcal{U}_{0,L,N+1}f=\mathcal{U}_{\lambda,L,N+1}f/(1+\lambda)$, then
\begin{equation*}
\Lambda_{\lambda,L}=\underset{f\neq 0}\sup\frac{\|\mathcal{U}_{\lambda,L,N+1}f\|_{\infty}}{\|f\|_{\infty}}=\frac{1}{1+\lambda}
\underset{f\neq 0}\sup\frac{\|\mathcal{U}_{0,L,N+1}f\|_{\infty}}{\|f\|_{\infty}}=\frac{1}{1+\lambda}\Lambda_L.
\end{equation*} 
Hence we prove this proposition.
\end{proof}
\begin{remark}
As hyperinterpolation reduces to interpolation when $L=N$ \cite{sloan1995polynomial}, Tikhonov regularization can also reduce Lebesgue constants of classical interpolation.
\end{remark}

Though Lebesgue constants are reduced by introducing regularization, approximation errors may be enlarged.
\begin{proposition}
Let $2L\leq2N+1$. Given $f\in\mathcal{C}([-1,1])$, and let $\mathcal{U}_{\lambda,L,N+1}f\in\mathbb{P}_L$ be defined by \eref{p:l2}. Then
\begin{equation*}
\|\mathcal{U}_{\lambda,L,N+1}f-f\|_{\infty}\leq(1+\Lambda_{\lambda,L})E_L(f)+\frac{\lambda}{1+\lambda}\|p^*\|_{\infty}.
\end{equation*}
\end{proposition}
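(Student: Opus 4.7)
The plan is to imitate the $L_2$ argument of Proposition~\ref{thm:L2}, replacing the $L_2$ norm with the uniform norm throughout and using Proposition~\ref{thm:reducelebesgue} (the new Lebesgue constant $\Lambda_{\lambda,L}$) in place of the stability estimate \eref{equ:stability}. The key observation driving the proof is the identity $\mathcal{U}_{\lambda,L,N+1}p = p/(1+\lambda)$ for every $p\in\mathbb{P}_L$, which follows from \eref{p:l2} together with the fact (already used in the proof of Proposition~\ref{thm:L2}) that $\mathcal{U}_{0,L,N+1}p=p$ for polynomials of degree at most $L$.

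First I would add and subtract the best approximation polynomial $p^*\in\mathbb{P}_L$ in the form
\begin{equation*}
\mathcal{U}_{\lambda,L,N+1}f-f=\mathcal{U}_{\lambda,L,N+1}(f-p^*)-(f-p^*)-(p^*-\mathcal{U}_{\lambda,L,N+1}p^*),
\end{equation*}
where linearity of $\mathcal{U}_{\lambda,L,N+1}$ is used implicitly. Taking the uniform norm and applying the triangle inequality produces three terms which I would bound separately.

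Next I would estimate each term. The first term satisfies $\|\mathcal{U}_{\lambda,L,N+1}(f-p^*)\|_\infty \leq \Lambda_{\lambda,L}\|f-p^*\|_\infty=\Lambda_{\lambda,L}E_L(f)$ by the definition \eref{equ:lebesgueconstant} of the uniform operator norm. The second term is simply $\|f-p^*\|_\infty=E_L(f)$. For the third term, the identity $\mathcal{U}_{\lambda,L,N+1}p^*=p^*/(1+\lambda)$ gives
\begin{equation*}
\|p^*-\mathcal{U}_{\lambda,L,N+1}p^*\|_\infty=\left\|p^*-\frac{p^*}{1+\lambda}\right\|_\infty=\frac{\lambda}{1+\lambda}\|p^*\|_\infty.
\end{equation*}
Adding the three bounds yields exactly $(1+\Lambda_{\lambda,L})E_L(f)+\frac{\lambda}{1+\lambda}\|p^*\|_\infty$.

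I do not anticipate any serious obstacle; the entire argument is the uniform analogue of Proposition~\ref{thm:L2}, and both nontrivial ingredients (the reduction of the Lebesgue constant by the factor $1/(1+\lambda)$, and the polynomial reproducing identity with the same factor) have already been established. The only minor care point is making sure to phrase the bound in terms of $\Lambda_{\lambda,L}$ rather than $\Lambda_L/(1+\lambda)$, so that the statement matches the form given and exposes the reduction effect of Tikhonov regularization on the Lebesgue constant directly.
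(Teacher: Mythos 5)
Your proof is correct and is essentially the paper's own argument: the paper bounds $\|\mathcal{U}_{\lambda,L,N+1}f-p^*\|_{\infty}$ first and then adds $\|f-p^*\|_{\infty}$, which is just a two-step packaging of your single three-term decomposition, using the same two ingredients ($\Lambda_{\lambda,L}$ as the operator norm and $\mathcal{U}_{\lambda,L,N+1}p^*=p^*/(1+\lambda)$). No gaps.
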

\begin{proof}
By the definition \eref{equ:lebesgueconstant} of Lebesgue constant of Tikhonov regularized approximation, $\|\mathcal{U}_{\lambda,L,N+1}(f-p^*)\|_{\infty}$ is not greater than $\Lambda_{\lambda,L}\|f-p^*\|_{\infty}$, thus
\begin{eqnarray}
 \|\mathcal{U}_{\lambda,L,N+1}f-p^*\|_{\infty}&\leq\Lambda_{\lambda,L}\|f-p^*\|_{\infty}+\|p^*-\mathcal{U}_{\lambda,L,N+1}p^*\|_{\infty}\nonumber\\
&=\Lambda_{\lambda,L}\|f-p^*\|_{\infty}+\frac{\lambda}{1+\lambda}\|p^*\|_{\infty}\label{equ:replacing}
\end{eqnarray}
as $\mathcal{U}_{\lambda,L,N+1}(f-p^*)=(\mathcal{U}_{\lambda,L,N+1}f-p^*)+(p^*-\mathcal{U}_{\lambda,L,N+1}p^*)$. Then the decomposition $\mathcal{U}_{\lambda,L,N+1}f-f=(\mathcal{U}_{\lambda,L,N+1}f-p^*)-(f-p^*)$ completes the proof.
\end{proof}
\begin{remark}
Comparing with the classical near-best approximation property $\|\mathcal{U}_{0,L,N+1}f-f\|_{\infty}\leq(1+\Lambda_{L})E_L(f)$, Tikhonov regularization reduces the part $(1+\Lambda_{L})E_L(f)$ but introduces a new part $\lambda\|p^*\|_{\infty}/(1+\lambda)$.
\end{remark}

\begin{theorem}
Let $2L\leq2N+1$. Given $f\in\mathcal{C}([-1,1])$ and its noisy version $f^{\epsilon}\in\mathcal{C}([-1,1])$, and let $\mathcal{U}_{\lambda,L,N+1}f\in\mathbb{P}_L$ be defined by \eref{p:l2}. Then
\begin{equation*}
\|\mathcal{U}_{\lambda,L,N+1}f^{\epsilon}-f\|_{\infty}\leq\Lambda_{\lambda,L}\|f^{\epsilon}-f\|_{\infty}+(1+\Lambda_{\lambda,L})E_{L}(f)+\frac{\lambda}{1+\lambda}\|p^*\|_{\infty}.
\end{equation*}
\end{theorem}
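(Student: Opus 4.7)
The plan is to mirror the strategy of Theorem \ref{thm:L2noise} but replace the $L_2$ norm by the uniform norm, using the preceding uniform-norm proposition (with Lebesgue constant $\Lambda_{\lambda,L}$) as the engine in place of Lemma \ref{lem:L2}. The identity $\mathcal{U}_{\lambda,L,N+1}p = p/(1+\lambda)$ for all $p \in \mathbb{P}_L$, which was already established in the discussion around \eref{equ:replacing} (and ultimately comes from $\mathcal{U}_{0,L,N+1}p = p$ together with the factor $1/(1+\lambda)$), will again be essential because it is precisely what produces the $\lambda/(1+\lambda)\|p^*\|_\infty$ term.

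First I would fix $p = p^*$, the best uniform approximation polynomial to $f$ of degree $L$, and write the algebraic decomposition
\begin{equation*}
\mathcal{U}_{\lambda,L,N+1}f^\epsilon - f \;=\; \mathcal{U}_{\lambda,L,N+1}(f^\epsilon - p^*) \;-\; (f - p^*) \;-\; \bigl(p^* - \mathcal{U}_{\lambda,L,N+1}p^*\bigr).
\end{equation*}
Taking the uniform norm and applying the triangle inequality, the three pieces are handled as follows. The first is bounded by $\Lambda_{\lambda,L}\|f^\epsilon - p^*\|_\infty$ via the definition \eref{equ:lebesgueconstant} of $\Lambda_{\lambda,L}$. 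The second is $\|f-p^*\|_\infty = E_L(f)$ by the choice of $p^*$. For the third, the identity $\mathcal{U}_{\lambda,L,N+1}p^* = p^*/(1+\lambda)$ gives $\|p^* - \mathcal{U}_{\lambda,L,N+1}p^*\|_\infty = \frac{\lambda}{1+\lambda}\|p^*\|_\infty$.

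Next I would split the noisy term using $\|f^\epsilon - p^*\|_\infty \leq \|f^\epsilon - f\|_\infty + \|f - p^*\|_\infty = \|f^\epsilon - f\|_\infty + E_L(f)$. Combining all the estimates yields
\begin{equation*}
\|\mathcal{U}_{\lambda,L,N+1}f^\epsilon - f\|_\infty \leq \Lambda_{\lambda,L}\|f^\epsilon - f\|_\infty + (1+\Lambda_{\lambda,L})E_L(f) + \frac{\lambda}{1+\lambda}\|p^*\|_\infty,
\end{equation*}
which is the stated bound.

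There is no real obstacle here; the proof is essentially a bookkeeping exercise that parallels Theorem \ref{thm:L2noise} line for line, with Lemma \ref{lem:L2}'s constant $V^{1/2}$ replaced by the Lebesgue constant $\Lambda_{\lambda,L}$ (made available through Proposition \ref{thm:reducelebesgue}). The only subtlety worth flagging is that one must remember $\mathcal{U}_{\lambda,L,N+1}$ is \emph{not} a projection onto $\mathbb{P}_L$ when $\lambda > 0$, so the residual $p^* - \mathcal{U}_{\lambda,L,N+1}p^*$ does not vanish; handling this residual via the shrinkage identity is exactly what produces the extra Tikhonov-type term $\lambda\|p^*\|_\infty/(1+\lambda)$.
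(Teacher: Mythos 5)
Your proof is correct and follows essentially the same route as the paper: the paper bounds $\|\mathcal{U}_{\lambda,L,N+1}f^{\epsilon}-p^*\|_{\infty}$ by invoking \eref{equ:replacing} with $f$ replaced by $f^{\epsilon}$ and then splits $\|f^{\epsilon}-p^*\|_{\infty}\leq\|f^{\epsilon}-f\|_{\infty}+E_L(f)$, which is exactly your three-term decomposition with the first and third pieces pre-packaged. The key ingredients — the Lebesgue-constant bound for $\mathcal{U}_{\lambda,L,N+1}(f^{\epsilon}-p^*)$ and the shrinkage identity $\mathcal{U}_{\lambda,L,N+1}p^*=p^*/(1+\lambda)$ producing the $\lambda\|p^*\|_{\infty}/(1+\lambda)$ term — are identical.
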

\begin{proof}
Since $\mathcal{U}_{\lambda,L,N+1}f^{\epsilon}-f=(\mathcal{U}_{\lambda,L,N+1}f^{\epsilon}-p^*)-(f-p^*)$, replacing $f$ by $f^{\epsilon}$ in \eref{equ:replacing} leads to
\begin{equation*}
\|\mathcal{U}_{\lambda,L,N+1}f^{\epsilon}-f\|_{\infty}=\Lambda_{\lambda,L}\|f^{\epsilon}-p^*\|_{\infty}+\frac{\lambda}{1+\lambda}\|p^*\|_{\infty}+\|f-p^*\|_{\infty}.
\end{equation*}
The decomposition $\|f^{\epsilon}-p^*\|_{\infty}\leq\|f^{\epsilon}-f\|_{\infty}+\|f-p^*\|_{\infty}$ completes the proof of the theorem. 
\end{proof}
\begin{remark}
When there exists noise and $\lambda=0$, there holds
\begin{equation*}
\|\mathcal{U}_{0,L,N+1}f^{\epsilon}-f\|_{\infty}\leq\Lambda_{L}\|f^{\epsilon}-f\|_{\infty}+(1+\Lambda_{L})E_{L}(f).
\end{equation*}
Recall that $\Lambda_{\lambda,L}<\Lambda_L$ if $\lambda>0$. The theorem asserts that Tikhonov regularization can reduce the error introduced by noise, and indicates again that there should be a trade-off strategy for $\lambda$ in practice.
\end{remark}

\section{Numerical experiments}\label{numericalexperiments}
In this section, we report numerical results to illustrate the theoretical results derived above and test the efficiency of the Tikhonov regularized approximation in Gauss quadrature points. Three testing functions are involved in the following experiments, which are a function given in \cite{berrut2004barycentric} 
\begin{equation*}
\displaystyle
f_1(x)=|x|+\frac{x}{2}-x^2,
\end{equation*}
an Airy function
\begin{equation*}
f_2(x)={\rm{Airy}}(40x),
\end{equation*}
and a rather wiggly function given in \cite{trefethen2013approximation}
\begin{equation*}
f_3(x)=\tanh(20\sin(12x))+0.02{\rm{e}}^{3x}\sin(300x).
\end{equation*}
Commands for computing Gauss quadrature points and weights, and barycentric weights are included in \textsc{Chebfun} 5.7.0 \cite{driscoll2014chebfun}. All numerical results are carried out by using MATLAB R2020a on a laptop (16 GB RAM, Intel® CoreTM i7-9750H Processor) with macOS Catalina.

We adopt the uniform error and the $L_2$ error to test the efficiency of approximation, which are estimated as follows. The uniform error of the approximation is estimated by
  \begin{eqnarray*}
  \|f(x)-p_{L,N+1}(x)\|_{\infty}&:=\underset{x\in[-1,1]}{\max}|f(x)-p_{L,N+1}(x)|\\
                           &\simeq\underset{x\in\mathcal{X}}{\max}|f(x)-p_{L,N+1}(x)|,
  \end{eqnarray*}
  where $\mathcal{X}$ is a large but finite set of well distributed points over the interval $[-1,1]$. The $L_2$ error of the approximation is estimated by a proper Gauss quadrature rule:
  \begin{eqnarray*}
  \|f(x)-p_{L,N+1}(x)\|_{L_2} &=\left(\int_{-1}^1w(x)(f(x)-p_{L,N+1}(x))^2{\rm d}x\right)^{1/2}\\
                        &\simeq\left(\sum_{j=0}^N\omega_j(f(x_j)-p_{L,N+1}(x_j))^2\right)^{1/2}.
  \end{eqnarray*}

We first test the efficiency of approximation scheme \eref{p:l2} of $f_1(x)$ and $f_2(x)$ by normalized Chebyshev polynomials of the first kind with data sampled on Gauss-Chebyshev points of the first kind in the presence of noise. The level of noise is measured by \emph{signal-to-noise ratio} (SNR), which is defined as the ratio of signal power to the noise power, and is often expressed in decibels (dB). A lower scale of SNR suggests more noisy data. We take $\lambda=10^{-2},10^{-1.9},\ldots,10^{-0.1},1$ to choose the best regularization parameter. Here we choose $\lambda=10^{-0.7}$ for all the following experiments. For more advanced and adaptive methods to choose the parameter $\lambda$, we refer to \cite{lazarov2007balancing,lu2010discrepancy,pereverzyev2015parameter}. Fix $N=500$, let $L$ be increasing from $10$ to $N$, and add 5dB Gauss white noise onto sampled data. Uniform errors and $L_2$ errors for approximations of both $f_1(x)$ and $f_2(x)$ are shown in Figure \ref{figure1}, illustrating that the Tikhonov regularization can reduce noise, especially when $L$ becomes large. The enlarging gap between $L_2$ errors is due to a fact that increasing $L$ requires more data but the data size is fixed (fixed $N$), hence the gap also suggests that Tikhonov regularization can handle this data shortage issue.  
\begin{figure}[htbp]
  \centering
  \includegraphics[width=\textwidth]{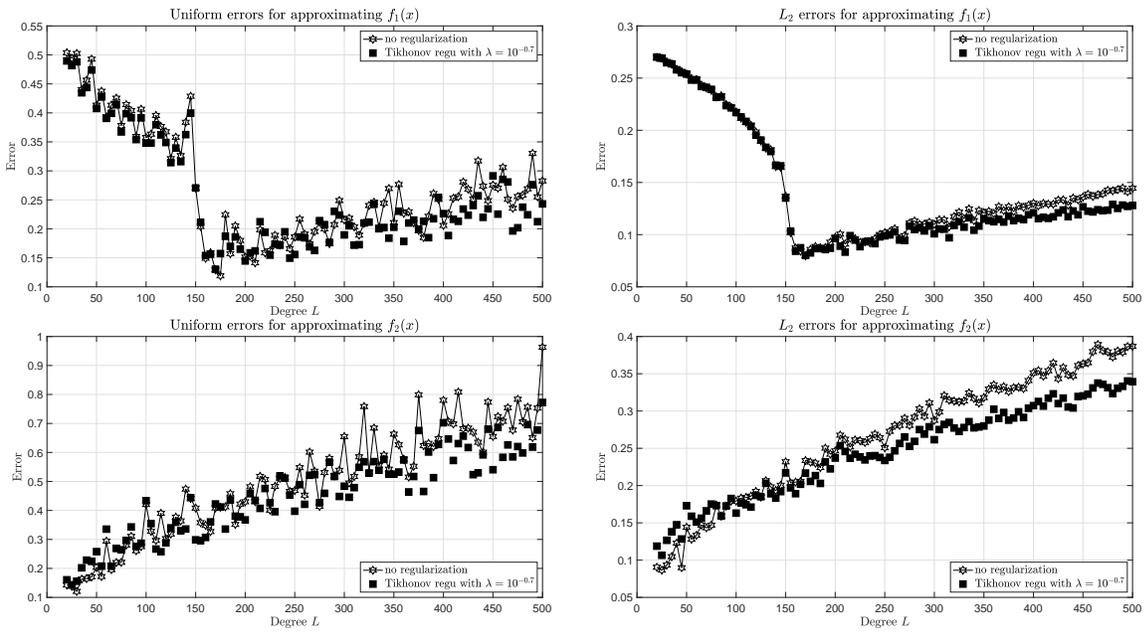}\\
  \caption{Computational results on approximation scheme \eref{p:l2} with fixed $N=500$ and increasing $L$ from 10 to $N$, in the presence of 5dB Gauss white noise.}\label{figure1}
\end{figure}

On the other hand, if we fix $L=500$ and let $N$ be increasing from $500$ to $2000$, that is, data size is increasing, then Figure \ref{figure2} describes decreasing uniform errors and $L_2$ errors with respect to $N$. The starting value of $N$ is 500 since Gauss quadrature would lose its exactness if $N\leq L$. Computational results plotted in Figure \ref{figure2} also assert that the Tikhonov regularization can reduce noise, especially when $N$ is small. In this case, the gap becomes narrow as $N$ increasing, which is due to the same fact that more data lead to better performance. This narrowing gap also indicates that Tikhonov regularization can handle this data shortage issue.
\begin{figure}[htbp]
  \centering
  \includegraphics[width=\textwidth]{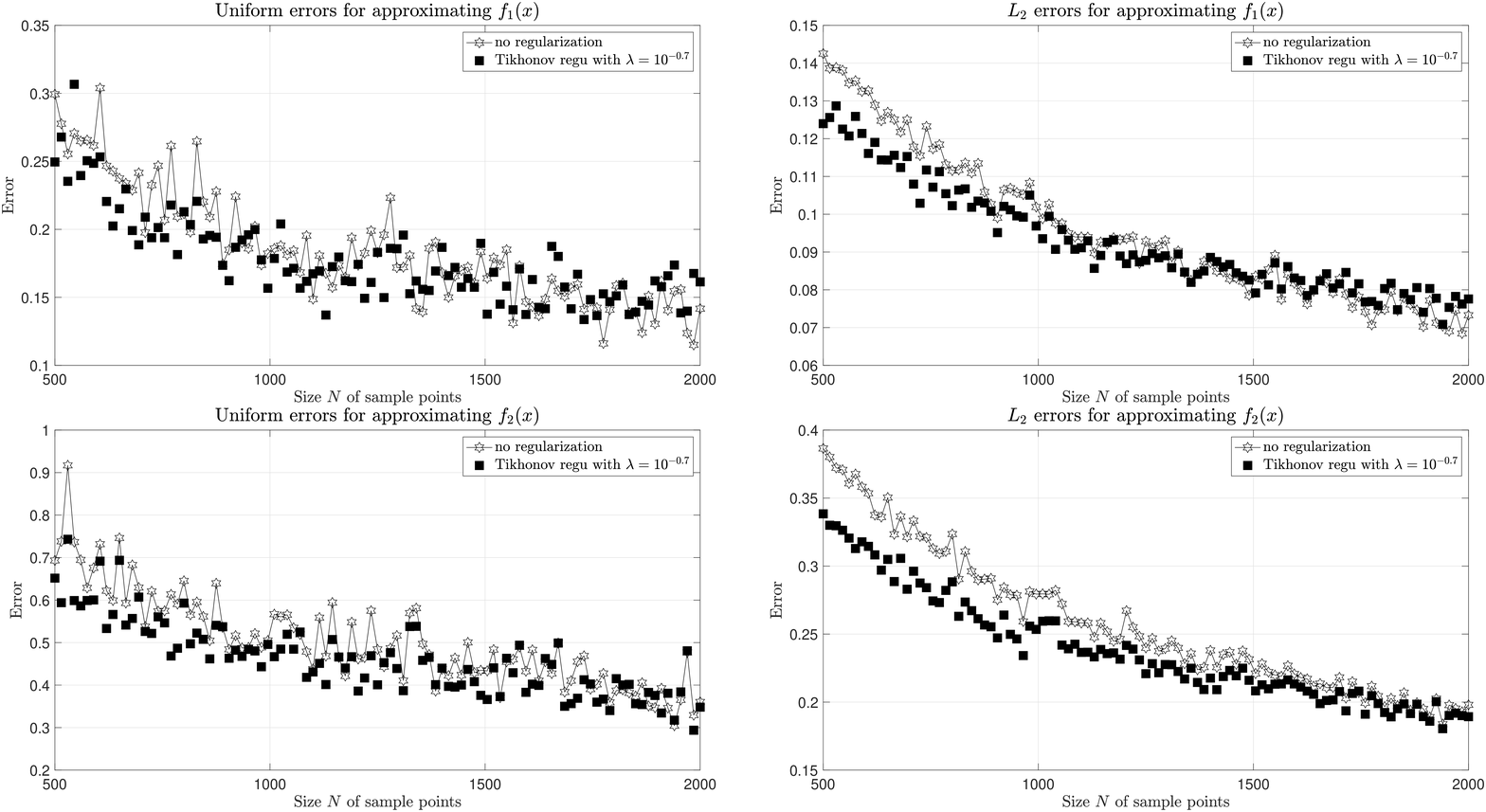}\\
  \caption{Computational results on approximation scheme \eref{p:l2} with fixed $L=500$ and increasing $N$ from 500 to 2000, in the presence of 5dB Gauss white noise.}\label{figure2}
\end{figure}

We then test the efficiency of Tikhonov regularized barycentric interpolation formula \eref{L2regubary} in approximating $f_3(x)$, with data sampled on Gauss-Chebyshev points of the first kind. The experiment is conducted via the barycentric interpolation scheme \eref{L2regubary} rather than the approximation scheme \eref{p:l2} under interpolatory conditions. Computational results in Figure \ref{figure3} show that Tikhonov regularized barycentric interpolation works better than classical barycentric interpolation in the presence of noise. However, in the noise-free case, both kinds of errors for classical barycentric interpolation decline to 0 as $L$ increasing but those for Tikhonov regularized case do not. This misconvergence result of Tikhonov regularized barycentric interpolation, in another perspective, is a good agreement with the theoretical result that regularization would introduce an additional error $\lambda\|p^*\|_{L_2}/(1+\lambda)$ into the $L_2$ error bound \eref{equ:L2error}, and this error is around $0.3$ in this experiment. 
\begin{figure}[htb]
  \centering
  \includegraphics[width=\textwidth]{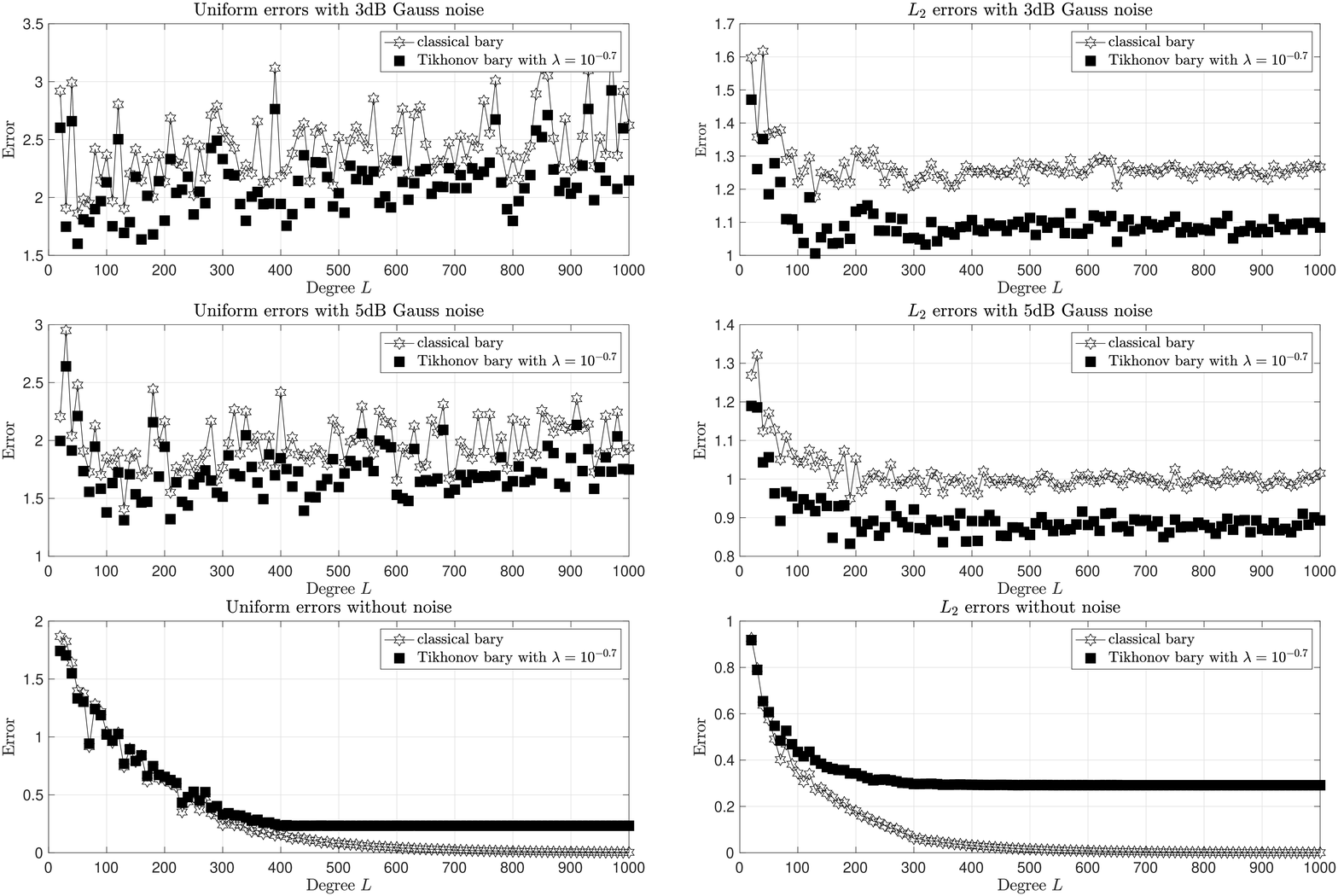}\\
  \caption{Computational results of classical barycentric formula \eref{equ:bary} and Tikhonov regularized barycentric formula \eref{L2regubary} in approximating $f_3(x)$, with the number $N$ of interpolatory points increasing from 20 to 1000.}\label{figure3}
\end{figure}

At last, we take a certain $N$, say $N=60$, and test on function $f_1(x)$. Figure \ref{figure4} reports the results, and ``true data'' in all subfigures denotes values of $f_1(x)$ at $61$ Gauss-Chebyshev points of the first kind. When data is sampled via $f_1(x)$, that is, there is no noise in sampling, as shown in the above experiment, regularization is not needed. For numerical results on classical barycentric interpolation with true data, we refer to \cite{berrut2004barycentric,wang2014explicit}. When data is sampled via a multiple of $f_1(x)$, which is $1.2f_1(x)$ here, true data and Tikhonov regularized interpolant appear to be in a good agreement, which is due to $1.2/(1+\lambda)=1.0004\approx1$ with $\lambda=10^{-0.7}$. We then test on different levels of additive random noise. Data are sampled via $(1+0.3r)*f_1(x_j)$ and $(1+0.4r)*f_1(x_j)$, respectively, where $j=0,1,\ldots,N$, and $r$ is a random number in $(0,1)$ which is generated by MATLAB command \texttt{rand(1)}. Tikhonov regularized barycentric formula performs better than the classical formula when the level of noise becomes large, especially near both endpoints. 
\begin{figure}[htbp]
  \centering
  \includegraphics[width=\textwidth]{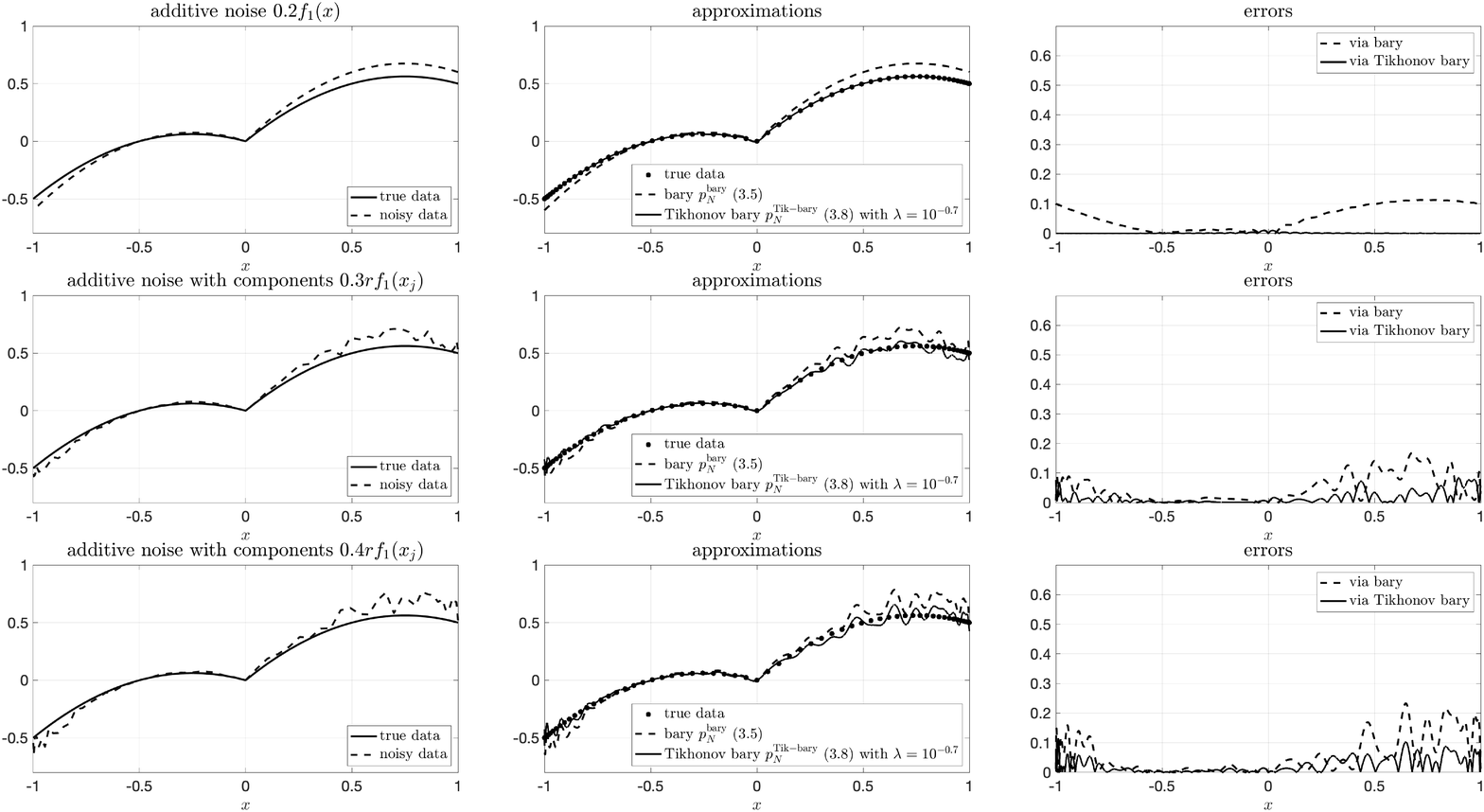}\\
  \caption{Left column: true data (solid) and noisy data (dash). Middle column: interpolants obtained by classical barycentric formula (dash) and Tikhonov regularized barycentric formula (solid) in 61 points. Right column: errors $|p_N^{\rm{bary}}(x)-f_1(x)|$ (dash) and $|p_N^{\rm{Tik-bary}}(x)-f_1(x)|$ (solid).}\label{figure4}
\end{figure}

If we add an oscillating term $\sin(10x)$ onto $f_1(x)$, plots in Figure \ref{figure5} show the similar results with those in Figure \ref{figure4}. In this figure, Tikhonov regularized barycentric formula also performs better than the classical formula in concerned levels of noise, especially near extreme points of $f_1(x)+\sin(10x)$. 
\begin{figure}[htbp]
  \centering
  \includegraphics[width=\textwidth]{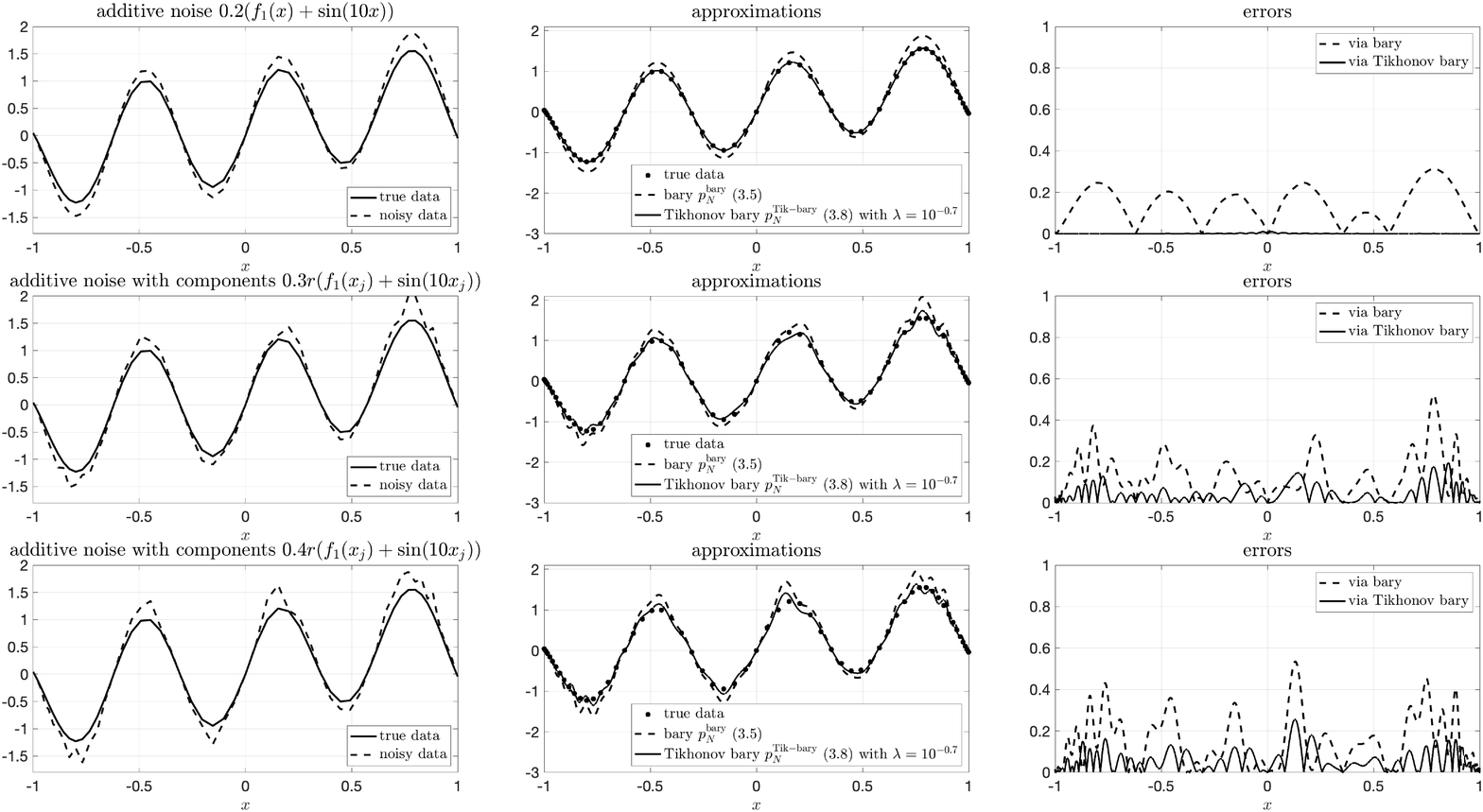}\\
  \caption{Left column: true data (solid) and noisy data (dash). Middle column: interpolants obtained by classical barycentric formula (dash) and Tikhonov regularized barycentric formula (solid) in 61 points. Right column: errors $|p_N^{\rm{bary}}(x)-(f_1(x)+\sin(10x))|$ (dash) and $|p_N^{\rm{Tik-bary}}(x)-(f_1(x)+\sin(10x))|$ (solid).}\label{figure5}
\end{figure}

\section{Concluding remarks}\label{concludingremarks}
What we have seen from the above is that Tikhonov regularization can reduce noise in sampling data with an approximation scheme, in terms of reducing Lebesgue constants and the error term relating to noise. But it also introduces an additional error term, hence a trade-off strategy should be customized in practice. These findings also suit for the newly presented Tikhonov regularized barycentric formulae. While solving this approximation problem, it is shown that proper choice of orthonormal polynomials and Gauss quadrature points leads to entry-wise closed-form solutions to the problem, which simplifies the analysis on the approximation scheme. Gauss quadrature rules can be relaxed by using the concept of Marcinkiewicz-Zygmund quadrature measure (see, for example, \cite{filbir2011marcinkiewicz,mhaskar2004polynomial,mhaskar2005polynomial,mhaskar2001spherical}), thus results in this paper can be generalized based on Marcinkiewicz-Zygmund quadrature. The reason why Gauss quadrature rules are adopted in this paper is two-fold: on the one hand, using Gauss quadrature is enough to provide a window into the behavior of approximation error in the usage of Tikhonov regularization and in the presence of noise; on the other hand, the development of barycentric interpolation formulae is based on Gauss quadrature points \cite{berrut2004barycentric}.

Although we only consider the simplest Tihonov regularization term, it also provides some useful information that regularization may improve performance of polynomial approximation. In inverse problems, statistics, and machine learning, different kinds of regularization terms are developed. We may consider other regularization techniques and derive other regularized barycentric interpolation formulae in the future. With the fast and stable property of barycentric formulae, regularized barycentric formulae provide a flexible choice for polynomial interpolation in noisy case, which only introduces a multiplicative factor $1/(1+\lambda)$ or maybe other corrective factors derived in the future. Last but far from the least, the choices of the polynomial degree $L$ and regularization parameter $\lambda$ deserve future studies. The polynomial degree has already been considered as a regularization parameter to deal with ill-conditioned issues \cite{lu2013legendre,mhaskar2020direct,mhaskar2013filtered,mhaskar2017deep}, and we expect an appropriate choice of $L$ to make better performance in the context of Tikhonov regularized polynomial approximation. Regularization parameter $\lambda$, which is chosen in a manual way in numerical experiments, also needs to be addressed more adaptively, see, for example, \cite{lazarov2007balancing,lu2010discrepancy,pereverzyev2015parameter}.

\section*{Acknowledgment}
The authors are greatly thankful to the anonymous referees for valuable suggestions that helped to improve this paper.

\section*{References}
\bibliographystyle{acm}
\bibliography{IP-refs}

\end{document}